\newtheorem{prop}[equation]{Proposition}
\newtheorem{thm}[equation]{Theorem}
\newtheorem{cor}[equation]{Corollary}
\newtheorem{lem}[equation]{Lemma}
\newtheorem{assumptions}[equation]{Assumptions}
\newtheorem{cria}[equation]{Criteria}
\theoremstyle{definition}
\newtheorem{defn}[equation]{Definition}
\newtheorem{defns}[equation]{Definitions}
\newtheorem{rem}[equation]{Remark}
\newtheorem{rems}[equation]{Remarks}
\newtheorem{exa}[equation]{Example}
\numberwithin{equation}{section} 
\newcommand{\sands}{\mbox{$\quad\text{and}\quad$}}
\newcommand{\Hom}{\operatorname{Hom}}
\newcommand{\letbe}{\mathbin{\raisebox{.3pt}{:}\!=}}
\newcommand{\belet}{\mathbin{=\!\raisebox{.3pt}{:}}}
\newcommand{\2}{\mskip-2mu} 
\newcommand{\1}{\mskip-1mu} 
\newcommand{\?}{\mskip-1.5mu}
\newcommand{\om}{\mskip1mu}
\newcommand{\hot}{\mathbin{\widehat{\otimes}}}
\newcommand{\CPn}{\mbox{$\C P^n$}}
\newcommand{\col}{\operatorname{colim}}
\newcommand{\hocolim}{\operatorname{hocolim}}
\newcommand{\GL}{\mbox{\it GL\/}} 
\newcommand{\MU}{\mbox{$M\hspace{-.5pt}U$}}
\newcommand{\HwK}{\mbox{$H\?\wedge\? K$}}
\newcommand{\HwMU}{\mbox{$H\?\wedge\? MU$}}
\newcommand{\C}{\mathbb{C}}
\newcommand{\bP}{\mathbb{P}}
\newcommand{\Q}{\mathbb{Q}} 
\newcommand{\R}{\mathbb{R}}
\newcommand{\Z}{\mathbb{Z}} 
\newcommand{\brpnp}{\mbox{$\R_{\scriptscriptstyle\geqslant}^{n+1}$}}
\newcommand{\srf}[1]{\mbox{${\text{\it SR}^F}$}}
\newcommand{\Th}{\mbox{\it Th}}
\newcommand{\Pc}{\mbox{$\mathbb{P}(\chi)$}}
\newcommand{\Pd}{\mbox{$\bP(\chi')$}}
\newcommand{\Tc}{\mbox{$T(\chi)$}}
\newcommand{\acts}{\mathbin{\raisebox{-.5pt}{\reflectbox{\begin{sideways}
$\circlearrowleft$\end{sideways}}}}}
\newcommand{\scirc}{\mathbin{\raisebox{-.0pt}{$\scriptstyle\circ$}}}
\newcommand{\xvs}{\mbox{$X_\varSigma$}}
\newcommand{\vsy}{\mbox{$\varSigma_Y$}}
\newcommand{\vsc}{\mbox{$\varSigma_{\chi}$}}
\newcommand{\EV}{\mbox{$E\hspace{0pt}V$}}
\newcommand{\PP}{\mbox{$P\hspace{-1pt}P$}}
\newcommand{\KBV}{\mbox{$K\hspace{-1.8pt}_B\hspace{-.6pt}V$}}
\newcommand{\HR}{\mbox{$H\hspace{-1pt}R$}}
\newcommand{\MUV}{\mbox{$M\hspace{-.5pt}U\hspace{-.2pt}V$}}
\newcommand{\MUBV}{\mbox{$M\hspace{-.5pt}U\hspace{-1.8pt}_B\hspace{-.2pt}V$}}
\newcommand{\cat}[1]{\mbox{\sc #1}}
\newcommand{\cats}{\cat{cat}(\varSigma)}
\newcommand{\catsop}{\cat{cat}^{op}(\varSigma)}
\newcommand{\catgca}{\cat{gcalg}}
\newcommand{\scat}[1]{\mbox{\scriptsize{\sc #1}}}
\newcommand{\ra}{\rightarrow}
\newcommand{\lra}{\longrightarrow}
\newcommand{\llra}{\relbar\joinrel\hspace{-2pt}\lra}
\newcommand{\lllra}{\relbar\joinrel\hspace{-2pt}\llra}
\newcommand{\Ddots}{\mathinner{\mkern1mu\raise\p@
\vbox{\kern7\p@\hbox{.}}\mkern2mu
\raise4\p@\hbox{.}\mkern2mu\raise7\p@\hbox{.}\mkern1mu}}
\DeclareMathOperator{\Fred}{Fred}
\begin{document}

\title[Equivariant {\em K}-theory and cobordism]
{The equivariant {\em K}-theory 
and cobordism rings of divisive weighted projective spaces}

\makeatletter
\def\@@and{and}
\renewcommand{\author@andify}{%
  \nxandlist{\unskip, }{\unskip{} \@@and~}{\unskip{} \@@and~}}
\makeatother

\author{Megumi Harada} \address{Department of Mathematics and
Statistics, McMaster University, 1280 Main Street West, Hamilton,
Ontario L8S4K1, Canada} \email{Megumi.Harada@math.mcmaster.ca}

\author{Tara S. Holm} \address{Department of Mathematics, Malott
Hall, Cornell University, Ithaca, New York 14853-4201, USA}
\email{tsh@math.cornell.edu}

\author{Nigel Ray} \address{School of Mathematics, University of
Manchester, Oxford Road, Manchester M13 9PL, UK}
\email{nigel.ray@manchester.ac.uk}

\author{Gareth Williams} \address{Department of Mathematics and 
Statistics, The Open University, Walton Hall, Milton Keynes MK7 6AA,
UK} \email{G.R.Williams@Open.ac.uk}

\keywords{Divisive weight vector, Equivariant K-theory, fan, piecewise
Laurent polynomial, weighted projective space}
\subjclass[2010]{Primary: 55N91; Secondary: 14M25, 57R18}



\begin{abstract}
We apply results of Harada, Holm and Henriques to prove that the
Atiyah-Segal equivariant complex $K$-theory ring of a divisive
weighted projective space (which is singular for non-trivial weights)
is isomorphic to the ring of integral piecewise Laurent polynomials on
the associated fan. Analogues of this description hold for other
complex-oriented equivariant cohomology theories, as we confirm in the
case of homotopical complex cobordism, which is the universal
example. We also prove that the Borel versions of the equivariant
$K$-theory and complex cobordism rings of more general singular toric
varieties, namely those whose integral cohomology is concentrated in
even dimensions, are isomorphic to rings of appropriate piecewise
formal power series. Finally, we confirm the corresponding
descriptions for any \emph{smooth}, compact, projective toric
variety, and rewrite them in a face ring context. In many cases our 
results agree with those of Vezzosi and Vistoli for algebraic 
$K$-theory, Anderson and Payne for operational $K$-theory, Krishna 
and Uma for algebraic cobordism, and Gonzalez and Karu for operational 
cobordism; as we proceed, we summarize the details of these 
coincidences.
\end{abstract}

\def\@@and{and}
\renewcommand{\andify}{%
  \nxandlist{\unskip, }{\unskip{} \@@and~}{\unskip{} \@@and~}}

\maketitle

%
%
%
%
%
%
%
%
%

\section{Introduction}\label{in}

Throughout this work $G$ is a compact Lie group, and $G\acts Y$ a
$G$-space; when $G$ is understood, we rewrite the latter as $Y$.
Our aim is to investigate the $G$-equivariant complex $K$-theory 
and complex cobordism rings of certain special families of $Y$, for which
$G$ is a torus $T^n$. Given the recent proliferation of $K$-theory 
and cobordism functors \cite{anpa:okt}, \cite{com:cce}, 
\cite{goka:bac}, \cite{krum:acr}, it is important to specify precisely
which we use, and to comment on their relationship with other versions
as we proceed. Our underlying philosophy is closest to algebraic 
topology and homotopy theory.

So far as $K$-theory is concerned, we focus mainly on the unreduced
Atiyah-Segal $G$-equivariant ring $K_G^*(Y)$ \cite{Seg68}, graded over
the integers for later convenience. If $Y$ is compact, then $K_G^0(Y)$
is constructed from equivalence classes of $G$-equivariant complex
vector bundles; otherwise, it is given by equivariant homotopy classes
$[Y,\Fred(\mathcal{H}_G)]_G$, where $\mathcal{H}_G$ is a Hilbert space
containing infinitely many copies of each irreducible representation
of $G$ \cite{AtiSeg04}.  For the $1$-point space $*$ with trivial
$G$-action, we write the \emph{coefficient ring} $K_G^*(*)$ as
$K_G^*$. It is isomorphic to $R(G)[z,z^{-1}]$, where $R(G)$ denotes
the complex representation ring of $G$, and realises $K_G^0$; the
\emph{Bott periodicity element} $z$ has cohomological dimension $-2$.
The equivariant projection $Y\to *$ induces the structure of a graded
$K_G^*$-algebra on $K^*_G(Y)$, for any $G\acts Y$.

For complex cobordism, our primary interest is tom Dieck's 
$G$-equivariant ring $MU_G^*(Y)$ \cite{td:bgm}, defined by equivariant
stable homotopy classes $[Y,\MU_G]_G$ of maps into the Thom spectrum 
$\MU_G$. Although the coefficient ring $MU_G^*$ remains undetermined, 
considerable information is available when $G=T^n$; Sinha \cite{sin:cce}, 
for example, has made extensive calculations, and 
solved the case $n=1$. The equivariant projection $Y\to *$ induces 
the structure of a graded $MU_G^*$-algebra on $MU^*_G(Y)$, for any 
$G\acts Y$. In fact $MU_G^*(-)$ is universal amongst unreduced 
complex-oriented $G$-equivariant cohomology theories, at least for 
abelian $G$ \cite{cogrkr:uec}. The most natural link between cobordism
and $K$-theory arises within this framework, and is provided by the 
equivariant Todd genus $td\colon MU^*_G(Y)\to K^*_G(Y)$ \cite{oko:cfi}.

Given the universality of $MU^*_G(-)$, we may follow the lead of 
\cite{HHH} and consider arbitrary complex-oriented cohomology 
theories $E^*_G(-)$, although we restrict attention to cases for 
which $G$ is a compact torus. We view the complex orientation as a 
choice of equivariant Thom class without further comment. In these 
situations, readers will lose little by interpreting $E$ as 
whichever of complex $K$-theory, complex cobordism, or integral 
cohomology takes their fancy.  When more detail is required, we shall treat 
$K$-theory first and cobordism second; both depend on the more familiar 
case $H^*_G(-)$ of integral cohomology, which we recall as necessary.

If $Y$ is an $n$--dimensional toric variety \cite{ful:itv}, it is
automatically endowed with an action of $T^n$. 
The problem of describing the ring $K_{T^n}^*(Y)$ 
has already been addressed, albeit indirectly, in the symplectic 
context \cite{HaL}. From the algebraic viewpoint, however, it is more 
natural to study algebraic vector bundles over $Y$, and to compute the 
corresponding algebraic $K$-theory. Influential contributions along 
these lines include Brion \cite{bri:spa}, Dupont \cite{dup:fpv}, 
Kaneyama \cite{kan:ctp}, Klyachko \cite{kly:ebt}, 
Liu and Yao \cite{liya:ste}, Morelli \cite{mor:ktt}, and 
Payne \cite{pay:tvb}, although the work of Vezzosi
and Vistoli \cite{vevi:hak} is closest to ours in spirit, and leads to
answers that are isomorphic to $K_{T^n}^*(Y)$ for all smooth $Y$. 
The results of \cite{vevi:hak} are also cited in the appendix to 
\cite{ros:ekt}, with appeal to arguments of Franz. The same arguments 
are likely to provide an alternative approach to our own results, but 
have yet to be fully documented. We emphasise our
insistence on integer coefficients, bearing in mind that several
of the above authors tensor their $K$-groups with $\Q$ or $\C$ 
throughout.

The techniques used by symplectic geometers apply to a wider class of
$T^n$-spaces, and are based on symplectic reduction, Morse theory
\cite{holm-matsumura}, and GKM graphs \cite{GKM}.  Algebraic
geometers, on the other hand, tend to restrict attention (at least 
over $\Z$) to smooth toric varieties $Y$, possibly non-compact, and
express their invariants in terms of the underlying fan $\vsy$. 

Our aim is to combine features of each viewpoint, and
describe $K_{T^n}^*(Y)$ in terms of $\vsy$ for a 
certain family of \emph{singular} toric varieties. Following 
\cite{bafrra:wps}, we refer to these as 
\emph{divisive weighted projective spaces}, and denote them by
$\Pc$. Recent work \cite[Theorem 1.2]{bafrnora:cwp} shows that 
\emph{any} weighted projective space is homotopy equivalent to 
one which is divisive, but such equivalences need not be equivariant, 
by the concluding remarks of \cite[Section 5]{bafrra:ecr}.

We are motivated by related calculations of the Borel equivariant 
cohomology of more general singular examples, in which 
$H_{T^n}^*(Y)$ is identified with the graded ring of piecewise 
polynomials on $\vsy$ \cite{bafrra:ecr}. In its simplest form, our 
main result states the following.
\begin{thm}\label{main1} 
For any divisive weighted projective space, $K_{T^n}^0(\Pc)$ is
isomorphic as a $K^0_{T^n}$-algebra to the ring of piecewise Laurent 
polynomials on $\vsy$; furthermore, $K_{T^n}^1(\Pc)$ is zero.
\end{thm}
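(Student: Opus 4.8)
The plan is to compute $K^*_{T^n}(\Pc)$ by restriction to the $T^n$-fixed points, exactly in the spirit of \cite{HHH}. First I would record the geometry: $\Pc$ has $n+1$ isolated fixed points $p_0,\dots,p_n$, one for each ray of the fan \vsy, and divisiveness of the weight vector $\chi=(\chi_0,\dots,\chi_n)$---say with $\chi_0\mid\chi_1\mid\dots\mid\chi_n$---lets a generic one-parameter subgroup supply an invariant Morse function whose flow-ups are even-dimensional cells indexed by the $p_i$. Attaching these cells in order of index shows that $K^*_{T^n}(\Pc)$ is a free $K^*_{T^n}$-module concentrated in even degrees; this at once gives $K^1_{T^n}(\Pc)=0$ and the injectivity of the restriction homomorphism
\[
\rho\colon K^0_{T^n}(\Pc)\lra \bigoplus_{i=0}^{n}K^0_{T^n}(p_i)=\bigoplus_{i=0}^{n}R(T^n).
\]

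Next I would apply the main theorem of \cite{HHH} to pin down the image of $\rho$. Their result furnishes a module basis of flow-up classes $\alpha_0,\dots,\alpha_n$ with the triangularity relations $\alpha_i|_{p_j}=0$ for $j<i$ and $\alpha_i|_{p_i}=e_i$, where $e_i$ is the $K$-theoretic equivariant Euler class of the negative normal space at $p_i$---a product of factors $1-t^{-w}$ over the weights $w$ of the $T^n$-representation on that space. The image then consists of exactly those tuples $(f_0,\dots,f_n)$ that satisfy, across each edge $p_ip_j$ of the moment graph, a congruence modulo the Euler class attached to that edge. It is here that divisiveness is indispensable: the divisibility chain among the $\chi_i$ forces the corresponding chain of divisibilities among the Euler classes $e_i$, which is precisely the integrality hypothesis needed for the \cite{HHH} description to hold over $\Z$ and not merely after inverting the weights.

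Finally I would match this congruence description with the ring of piecewise Laurent polynomials on \vsy. Such a function assigns to each maximal cone an element of $R(T^n)$, subject to agreement modulo the character cutting out the shared wall on each codimension-one intersection. Since the maximal cones and their walls correspond bijectively to the fixed points $p_i$ and the edges $p_ip_j$, and since the wall characters are exactly the edge Euler classes, the two subrings of $\bigoplus_i R(T^n)$ coincide. The identification respects products and the $K^0_{T^n}$-module structure, so it is an isomorphism of $K^0_{T^n}$-algebras as required.

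The hard part will be the integrality step of the second paragraph. Over $\Q$ the GKM-type description of the image is automatic from localization, but showing that the image is the \emph{full} integral ring of piecewise Laurent polynomials---neither a proper subring of finite index nor a strictly larger ring---depends entirely on verifying the chain of divisibilities among the Euler classes $\prod(1-t^{-w})$ that divisiveness provides. Carrying out that verification, and confirming that it is exactly the hypothesis \cite{HHH} demands, is the delicate combinatorial core of the argument.
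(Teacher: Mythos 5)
Your overall skeleton is the same as the paper's: an invariant, even-dimensional cell structure gives vanishing in odd degrees, the theorem of \cite{HHH} identifies the image of restriction to the fixed-point set as a congruence subring of $\prod_i K^*_{T^n}$, and that subring is then matched with piecewise Laurent polynomials by comparing edge classes with wall characters (this is precisely Proposition \ref{gkmdivwps} followed by Theorem \ref{supmain}.1). However, what you single out as ``the delicate combinatorial core'' --- verifying a \emph{chain of divisibilities} among the fixed-point Euler classes $e_i$ --- is not a hypothesis of \cite{HHH}, and it is false even for the standard $\C P^2$: with the ordering coming from the paper's cell structure, the downward Euler classes at the three fixed points are $1$, $1-\alpha_2\alpha_1^{-1}$ and $(1-\alpha_1)(1-\alpha_2)$, and the second does not divide the third. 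What \cite{HHH} actually demands (Assumptions \ref{asss}) is, first, that each stratum be the Thom space of a genuine $T^n$-representation with compatible attaching data, and second, that the one-dimensional summands have Euler classes that are non-zero-divisors and \emph{pairwise relatively prime}. Divisiveness enters at the first step, not the second: it is needed so that the exponents $\chi_{n-j}/\chi_{n-i}$ in \eqref{repsci} and \eqref{repsdij} are integers, i.e.\ so that the cells are unit discs of honest representations $\rho_i=\bigoplus_{j<i}\rho_{i,j}$ with linear attaching maps. Once that is in place, relative primality is automatic from the pairwise linear independence of the weight vectors \eqref{dijalphas}, via Criteria \ref{cricoll} (\cite[Lemma 5.2]{HHH}); no divisibility chain is needed or available. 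So if you carried out your plan as written, you would be attempting to prove a false statement which, even if true, would not verify the hypotheses you need.

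Two smaller points. The fixed points of $\Pc$ correspond to the \emph{maximal} cones of $\vsc$, not to its rays (for weighted projective space both number $n+1$, which hides the slip; your third paragraph uses the correct bijection). And the final matching of edge Euler classes with wall characters is not a formality: it requires an explicit choice of ray matrix for $\vsc$ (equation \eqref{raysvsc}) and a computation of the primitive normal vectors of the codimension-one cones, which is the content of Lemma \ref{idealsvsc}; only after that computation do the two congruence subrings of $\prod_i K^*_{T^n}$ literally coincide, and one still has to check (as in the proof of Theorem \ref{supmain}) that values on maximal cones subject to wall congruences extend uniquely and compatibly over \emph{all} cones of the fan to give an element of the limit $P_K(\alpha;\vsc)$.
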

A precise statement is proven as Theorem \ref{supmain}.1 in 
Section \ref{coap}.

More recently, Anderson and Payne \cite{anpa:okt} have introduced
equivariant \emph{operational} algebraic $K$-theory, and identified   
the rings op$K_{T^n}^\circ(Y)$ with piecewise Laurent
polynomials on $\vsy$. Their calculations are valid for \emph{all}
toric varieties, and therefore agree with ours on divisive 
weighted projective spaces.

Turning to $MU_{T^n}^*(\Pc)$, we note first that algebraic 
geometers have developed a successful theory of \emph{algebraic 
cobordism} during the last 15 years, by working over the Lazard 
ring $L^*$. They have also introduced equivariant versions that are 
related to $MU_{T^n}^*(-)$. As described by Krishna and Uma 
\cite{krum:acr}, for example, these theories are complete; so their 
coefficient rings cannot be isomorphic to $MU_{T^n}^*$. 
Nevertheless, the equivariant algebraic cobordism ring of 
many toric varieties $Y$ may be expressed in terms of 
piecewise formal power series on $\vsy$ \cite{krum:acr}. As 
we explain below, this is isomorphic to the Borel equivariant cobordism 
ring $MU^*(ET^n\times_{T^n}Y)$ in cases such as smooth $Y$, or 
products of weighted projective spaces.

Our conclusions for complex cobordism are based on the fact that 
$MU_{T^n}^*$ is an algebra over the Lazard ring $L^*$, graded 
cohomologically. So we refer to $MU_{T^n}^*$ as the ring of 
\emph{$T^n$-cobordism forms}, and express our second result 
accordingly.
\begin{thm}\label{main2} 
For any divisive weighted projective space, $MU_{T^n}^*(\Pc)$ is
isomorphic as an $MU^*_{T^n}$-algebra to the ring of piecewise
cobordism forms on $\vsy$; in particular, $MU_{T^n}^*(\Pc)$ is zero in
odd dimensions.
\end{thm}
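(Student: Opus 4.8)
The plan is to establish Theorem~\ref{main2} by running the argument of Theorem~\ref{supmain}.1 with the universal complex-oriented theory $MU_{T^n}^*(-)$ in place of $K_{T^n}^*(-)$, exploiting the universality recorded above. The geometric input is identical: a divisive weighted projective space $\Pc$ admits a $T^n$-invariant filtration $*=X_0\subset X_1\subset\dots\subset X_n=\Pc$ whose successive quotients $X_k/X_{k-1}$ are Thom spaces of even-dimensional complex $T^n$-representations, and whose fixed set $(\Pc)^{T^n}$ consists of $n+1$ isolated points, one in each stratum. Applying $MU_{T^n}^*(-)$ and invoking the Thom isomorphism (available since $MU$ is complex oriented), the long exact sequences of the pairs $(X_k,X_{k-1})$ split into short exact sequences. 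This immediately shows that $MU_{T^n}^*(\Pc)$ is a free $MU_{T^n}^*$-module concentrated in even cohomological degrees, so that it vanishes in odd dimensions, and that the restriction homomorphism to the fixed set
\[
MU_{T^n}^*(\Pc)\longrightarrow MU_{T^n}^*\bigl((\Pc)^{T^n}\bigr)=\bigoplus_{i=0}^{n}MU_{T^n}^*
\]
is injective.

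Next I would identify the fixed points with the maximal cones of $\vsy$ and the codimension-one cones (walls) with the edges of the associated GKM graph, the normal $T^n$-representation along each wall being a single character determined by the weight vector $\chi$. Following \cite{HHH}, I would then characterise the image of the restriction map as those tuples $(f_0,\dots,f_n)$ whose components agree, along each wall, modulo the cobordism Euler class of the corresponding normal character. The divisive hypothesis is what makes these Euler classes tractable: it guarantees that the successive weights divide one another, so that the relevant characters and their Euler classes can be compared cleanly across adjacent cones. Matching this edge-congruence description with the definition of piecewise cobordism forms on $\vsy$ then yields the desired isomorphism of $MU_{T^n}^*$-algebras.

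The main obstacle, by contrast with the $K$-theoretic case, is that the cobordism Euler class $e(\lambda)$ of a character $\lambda$ is neither linear, as in $H^*_{T^n}$, nor of the multiplicative form $1-[\lambda]$, as in $K^*_{T^n}$, but is a formal power series dictated by the formal group law of $MU$. I expect the delicate points to be, first, verifying that these Euler classes are non-zero-divisors in the genuinely equivariant ring $MU_{T^n}^*$, so that the edge congruences cut out exactly the subring of piecewise forms and nothing larger; and second, carrying the argument through over $\Z$ rather than after inverting the weights or tensoring with $\Q$. Here the even-cell structure supplied by divisiveness is essential, since it forces the relevant spectral sequences to collapse integrally and exhibits the piecewise forms as a free $MU_{T^n}^*$-submodule. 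As a consistency check, applying the equivariant Todd genus $td\colon MU_{T^n}^*(\Pc)\to K_{T^n}^*(\Pc)$ should carry piecewise cobordism forms to the piecewise Laurent polynomials of Theorem~\ref{main1}, confirming compatibility with the $K$-theory computation.
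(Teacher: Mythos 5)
Your proposal is correct and follows essentially the same route as the paper: there the four Assumptions \ref{asss} are verified for the invariant skeletal filtration of $\Pc$, \cite[Theorem 3.1]{HHH} yields the fixed-point description of Proposition \ref{gkmdivwps}, and that description is then matched cone by cone with $P_{M\1U}(\vsc)$ in the proof of Theorem \ref{supmain}.2. The two delicate points you flag are settled exactly as you anticipate: the cobordism Euler classes are non-zero-divisors because $MU_{T^n}^*$ is an integral domain \cite[Corollary 5.3]{sin:cce}, and their pairwise relative primality follows from Criteria \ref{cricoll} (\cite[Lemma 5.2]{HHH}), so the argument goes through integrally.
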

A precise statement is proven as Theorem \ref{supmain}.2 in 
Section \ref{coap}.

Most recently, inspired by \cite{anpa:okt}, Gonzalez and 
Karu \cite{goka:bac} have defined equivariant \emph{operational}
algebraic cobordism. For any quasiprojective toric variety $Y$, 
they identify their operational ring with the ring of piecewise 
formal power series on $\vsy$, and therefore with 
$MU^*(ET^n\times_{T^n}Y)$ for smooth $Y$, or products of weighted 
projective spaces.

We introduce weighted projective spaces as singular toric varieties in
Section \ref{weprsp}, focusing on divisive examples $\Pc$ and their
invariant CW-structures. In Section \ref{gegkmth} we recall the
generalised GKM-theory that allows us to compute $K_{T^n}^*(Y)$ 
and $MU_{T^n}^*(Y)$
for certain stratified $T^n$-spaces $Y$, and confirm that the theory
applies to $\Pc$. In order to rewrite the outcome in the language of
Theorems \ref{main1} and \ref{main2},
we devote Section \ref{pial} to describing diagrams of algebras, 
and piecewise structures on arbitrary fans. We combine
the two viewpoints in Section \ref{coap}, and deduce a version of
Theorems \ref{main1} and \ref{main2}
that holds for a wider class of equivariant
cohomology theories. In Section \ref{coboco} we relate
$K_{T^n}^*(\Pc)$ and $MU_{T^n}^*(\Pc)$ to the Borel equivariant 
$K$-theory and cobordism 
of $\Pc$, in terms of piecewise formal power series, the Chern
character, and the Boardman homomorphism.
Finally, in Section \ref{smca}, we extend our conclusions to 
\emph{smooth} toric varieties, and rewrite the resulting piecewise 
algebras in the context of face rings.

Before we begin, it is convenient to introduce notation and conventions 
that we shall use without further comment.

We write $S^1$ for the circle as a topological space, and
$T<\C^\times$ for its realisation as the group of unimodular complex
numbers under multiplication. The $(n+1)$--dimensional compact torus
$T^{n+1}$ is a subgroup of the algebraic torus $(\C^\times)^{n+1}$,
and acts on $\C^{n+1}$ by coordinate-wise multiplication. This is the
\emph{standard action}; it preserves the unit sphere
$S^{2n+1}\subset\C^{n+1}$, and the corresponding orbit space may be
identified with the standard $n$-simplex $\varDelta^n\subset\brpnp$ in
the positive orthant.

Readers who require background information and further references on
equivariant topology may consult \cite{alpu:cmt}, and the survey
articles in \cite{may:ehc}. For fans, toric varieties, and their
topological aspects, we suggest \cite{fra:dtv}, \cite{ful:itv} and
\cite{oda:cba}.

{\bf Acknowledgments.} We are especially grateful to Dave Anderson and Sam Payne for their
advice and encouragement on matters of algebraic geometry, to Tony
Bahri and Matthias Franz for many hours of discussion on the topology
of weighted projective spaces, and to the Hausdorff Institute in Bonn
for supporting our work together on this and related projects. Megumi
Harada was partially supported by an Early Researcher Award from the 
Ministry of Research and Innovation of Ontario, and a Discovery Grant 
from the NSERC of Canada; Tara Holm was partially supported by a PCCW 
Affinito-Stewart Grant, Grant 208975 from the Simons Foundation, and 
NSF Grant DMS--1206466.

%
%
%
%
%
%
%
%
%

\section{Weighted projective space}\label{weprsp}

Weighted projective spaces are amongst the simplest and most elegant
examples of \emph{toric orbifolds} \cite{ful:itv}, and we devote this
section to summarising their definition and basic properties.

A {\em weight vector} $\chi$ is a sequence $(\chi_0,\dots,\chi_n)$ of 
$n+1$ positive integers; $\chi$ determines a subcircle $\Tc< T^{n+1}$ 
by 
\begin{equation*} 
\Tc\;=\;\{(t^{\chi_0},\dots,t^{\chi_n}):|t|=1\}, 
\end{equation*} 
which acts on $S^{2n+1}$ with finite stabilizers. Then the weighted
projective space $\Pc$ is defined to be the orbit space
$S^{2n+1}/T(\chi)$. Each point of $\Pc$ may be written as an
equivalence class $[z]=[z_0,\dots,z_n]$, where the $z_j$ are known as
{\em homogenous coordinates}. Permutations of the $z_j$ induce
self-homeomorphisms of $\Pc$, so we may reorder the weights as
required; it is often convenient to assume that they are
non-decreasing. Of course $\Pc$ may equally well be exhibited as the
orbit space $(\C^{n+1}\setminus\{0\})/\C^\times(\chi)$, and therefore
as a complex algebraic variety.

The finite stabilisers ensure that $\Pc$ is an {\em orbifold}, which
is singular for $n>1$ unless $\chi=(d,\dots,d)$ for some positive
integer $d$.  The residual action of the torus $T^n\cong T^{n+1}/\Tc$
turns $\Pc$ into a {\em toric} orbifold, with quotient polytope an
$n$-simplex. If $\chi=(1,\dots,1)$, then $\Tc$ is the diagonal circle
$T_\delta<T^{n+1}$, and $\Pc$ reduces to the standard projective space
$\CPn$. In this case, $K_{T^n}^*(\CPn)$ is computed in 
\cite[\S 4.1]{grwi:pdk}.

Orbifolds may be studied from a number of different perspectives, and
recent articles have focused on their interpretation as {\em
  groupoids} \cite{moe:ogi} and as {\em stacks}
\cite{ler:oas}. Several invariants of these richer structures have
been defined, such as the orbifold fundamental group
\cite{adleru:ost}.  Nevertheless, in this work we remain firmly in the
topological world, and study the underlying topological space $\Pc$;
in other contexts, it is known as the {\em coarse moduli space} of the
stack. There has been a recent surge of interest \cite{bafrra:ecr},
\cite{bafrnora:cwp} in its $T^n$-equivariant topological invariants.

By construction, $\bP(d\chi)$ is equivariantly homeomorphic to
$\bP(\chi)$ for any positive integer $d$, although they differ as
orbifolds. For our purposes, it therefore suffices to assume that the
greatest common divisor $\gcd(\chi)$ is $1$; in orbifold terminology,
this is tantamount to restricting attention to \emph{effective}
cases. If $\gcd(\chi)=1$, then \cite{dol:wpv} provides an equivariant
homeomorphism
\[
\bP(d\chi_0,\dots,d\chi_{j-1},\chi_j,d\chi_{j+1},\dots,d\chi_n)
\;\cong\;\Pc
\]
for any $0\leq j\leq n$, and any positive integer $d$ such that
$\gcd(d,\chi_j)=1$. Further simplification is therefore possible by 
insisting that $\chi$ be {\em normalised}, in the sense that
\begin{equation}\label{norm}
\gcd(\chi_0,\dots,\widehat{\chi_j},\dots,\chi_n)\;=\;1
\end{equation}
for every $0\leq j\leq n$. 

We may impose additional restrictions on the weights, with important 
topological consequences.
\begin{defns}\label{fidivtodiv}
The weight vector $\chi$ and the weighted projective space $\Pc$ are
\begin{enumerate}
\item[{\bf 1.}] 
\emph{weakly divisive} if $\chi_j$ divides $\chi_n$ for every $0\leq j<n$,
\item[{\bf 2.}] 
\emph{divisive} if $\chi_{j-1}$ divides $\chi_j$ for every 
$1\leq j\leq n$.
\end{enumerate}
\end{defns}
A divisive $\chi$ is automatically weakly divisive, and is necessarily
non-decreasing. Moreover, $\chi$ is divisive precisely when the
reverse sequence $\chi_n$, \dots, $\chi_0$ is {\em well ordered}, in
the sense of Nishimura and Yosimura \cite{niyo:qkt}. 
\begin{rem}[{\cite[Theorem 3.7, Corollary 3.8]{bafrra:wps}}]
\label{fact:weak-thom} If $\Pc$ is weakly divisive, then it is 
homeomorphic to the Thom space of a complex line bundle over $\Pd$,
where $\chi' = (\chi_0,\dots,\chi_{n-1})$; if it is divisive, then it
is homeomorphic to an $n$-fold iterated Thom space of complex line
bundles over the one-point space $*$. 
\end{rem}

In case $\chi_0=1$, there exists a canonical isomorphism $j\colon T^n\to
T^{n+1}/\Tc$, defined by setting
$j(u_1,\dots,u_n)=[1,u_1,\dots,u_n]$; the resulting action of $T^n$ on
$\Pc$ satisfies
\begin{equation}\label{canact}
(u_1,\dots,u_n)\cdot[z_0,z_1,\dots,z_n]\;=\;[z_0,u_1z_1,\dots,u_nz_n].
\end{equation}

From this point onwards, we therefore make the following assumptions.
\begin{assumptions}\label{assdivchicanact}\hfill{}
\begin{enumerate}
\item[{\bf 1.}]
The weight vector $\chi$ is both normalised and divisive, so
$\chi_0=\chi_1=1$.
\item[{\bf 2.}]
The residual action $T^n\acts\Pc$ is given by the isomorphism $j$ and 
\eqref{canact}.
\end{enumerate}
\end{assumptions}

For {\em any} weighted projective space, Kawasaki's calculations
\cite{kaw:ctp} imply the existence of a homotopy equivalent CW-complex
with a single cell in every even dimension. This has been realized in
\cite{bafrnora:cwp}, but current evidence suggests that an explicit
cellular decomposition for the general case is unpleasantly
complicated \cite{oni:phd}. Nevertheless, Remark \ref{fact:weak-thom}
provides an easy solution for divisive $\chi$. Given any $1\leq i\leq
n$, it is convenient to write $D^{2i}$ for the closed unit disc
\[
\{w:|w_{n-i+1}|^2+\dots+|w_n|^2\leq 1\}\;\subset\;\C^i,
\]
and $g_i\colon D^{2i}\ra\Pc$ for the map given by 
\begin{equation}\label{attachmap}
g_i(w)\;=\;
\big[0,\dots,0,(1-|w_{n-i+1}|^2-\dots-|w_n|^2)^{1/2},w_{n-i+1},\dots,
w_n\big].
\end{equation}
For $i=0$, let $D^0=\{0\}$ and $g_0(0)=[0,\dots,0,1]$.

\begin{prop}\label{CWstruc}
For every divisive $\Pc$, the $g_i$ are characteristic maps for a 
CW-structure that contains exactly $n+1$ cells. 
\end{prop}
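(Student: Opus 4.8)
The plan is to verify directly the three defining properties of a CW-structure — that the open cells partition $\Pc$, that each $g_i$ restricts to a homeomorphism of the open disc onto its cell, and that each boundary sphere lands in the union of lower-dimensional cells — and then to assemble these into an honest cell structure by an inductive pushout argument that simultaneously disposes of the weak-topology condition.

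First I would record the elementary combinatorial bookkeeping. For a point $p=[z_0,\dots,z_n]\in\Pc$, the index $m(p)=\min\{j:z_j\neq 0\}$ is independent of the chosen representative, since rescaling by $t^{\chi_j}$ (with $|t|=1$) never annihilates a coordinate. Inspecting \eqref{attachmap} shows that $g_i(\operatorname{int}D^{2i})$ is exactly the set $e_{2i}:=\{p:m(p)=n-i\}$, while $g_i(\partial D^{2i})$, where the real coordinate vanishes, consists of points with $z_0=\dots=z_{n-i}=0$ and hence lies in $\{p:m(p)\ge n-i+1\}$. Since every point of $\Pc$ has a well-defined $m(p)\in\{0,\dots,n\}$, the sets $e_0,e_2,\dots,e_{2n}$ partition $\Pc$ into $n+1$ pieces, one in each even dimension.

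Next comes the heart of the argument: $g_i$ is a bijection from $\operatorname{int}D^{2i}$ onto $e_{2i}$. Surjectivity is a normalisation — given $p$ with $m(p)=n-i$, scale a representative onto the sphere and choose $t$ (a $\chi_{n-i}$-th root of the phase of $z_{n-i}$) so that the $(n-i)$-th coordinate becomes real and positive; the remaining coordinates then furnish the required $w\in\operatorname{int}D^{2i}$. Injectivity is where divisiveness is indispensable. If $g_i(w)=g_i(w')$, then the two sphere representatives differ by some $t$ with $|t|=1$; comparing the real positive $(n-i)$-th coordinates forces $t^{\chi_{n-i}}=1$, and because $\chi$ is \emph{divisive} the chain $\chi_{n-i}\mid\chi_{n-i+1}\mid\dots\mid\chi_n$ gives $\chi_{n-i}\mid\chi_k$ for every $k\ge n-i$, whence $t^{\chi_k}=1$ and $w_k=w'_k$ for all $k>n-i$. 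I expect this step — specifically the implication $t^{\chi_{n-i}}=1\Rightarrow t^{\chi_k}=1$ — to be the crux and the precise point at which the hypothesis is used; without it the map would only be a nontrivial quotient of the disc and no genuine cell would result.

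Finally I would upgrade these bijections to a CW-structure by building the skeleta inductively. Set $X_{2i}=\bigcup_{j\le i}g_j(D^{2j})=\{p:m(p)\ge n-i\}$, a closed (hence compact) subspace cut out by the vanishing of the first $n-i$ coordinates, with $X_0$ a single point. Assuming $X_{2(i-1)}$ is already a CW-complex, I would show that the canonical map from the pushout $X_{2(i-1)}\cup_{g_i|_{\partial D^{2i}}}D^{2i}$ to $X_{2i}$ is a continuous bijection: it is onto because $\operatorname{im}g_i=e_{2i}\cup g_i(\partial D^{2i})$, and injective because $e_{2i}$ is disjoint from $X_{2(i-1)}$ and $g_i$ is injective on the interior. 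As the pushout is compact and $\Pc$ is Hausdorff, this continuous bijection is automatically a homeomorphism, so $X_{2i}$ is obtained from $X_{2(i-1)}$ by attaching a single $2i$-cell with characteristic map $g_i$; the same compact-to-Hausdorff observation gives the weak topology for free. Running the induction to $i=n$ exhibits $\Pc=X_{2n}$ as a CW-complex with exactly the $n+1$ cells $e_0,e_2,\dots,e_{2n}$ and characteristic maps $g_0,\dots,g_n$, as required.
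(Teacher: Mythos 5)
Your proposal is correct and takes essentially the same route as the paper: a direct check that each $g_i$ maps the open disc bijectively (indeed homeomorphically) onto the open cell $\{[z]:z_0=\dots=z_{n-i-1}=0,\,z_{n-i}\neq 0\}$ and its boundary sphere into the union of lower-dimensional cells. The paper simply asserts the homeomorphism and omits all details, so your explicit injectivity argument --- isolating $t^{\chi_{n-i}}=1\Rightarrow t^{\chi_k}=1$ as the precise point where divisiveness is needed --- and the pushout/compact-to-Hausdorff assembly of the skeleta are exactly the steps it leaves to the reader.
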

\begin{proof}
For $1\leq i\leq n$, the restriction of $g_i$ to the interior of 
$D^{2i}$ is a homeomorphism onto
\begin{equation}\label{ciconds}
\{[z]:z_0=\dots=z_{n-i-1}=0,\,z_{n-i}\neq 0\}\;\subset\;\Pc,
\end{equation}
which is therefore an open $2i$-cell.
Furthermore, $g_i$ maps the boundary of $D^{2i}$ onto the subspace 
$\{[z]:z_0=\dots=z_{n-i}=0\}$, which is the union of all lower 
dimensional cells. The zero cell is $\{[0,\dots,0,1]\}$. 
\end{proof}
\begin{cor}\label{invariantcells}
The CW-structure is invariant under the residual action of $T^n$.
\end{cor}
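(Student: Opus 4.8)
The plan is to prove invariance in the most direct way possible: show that each open cell of the CW-structure of Proposition~\ref{CWstruc} is carried onto itself by every element of $T^n$. Once this is established, each skeleton (being the union of the cells of dimension at most $2k$) is automatically a $T^n$-invariant subspace, which is exactly what is meant by the CW-structure being invariant. The single observation that drives everything is that the residual action \eqref{canact} multiplies each homogeneous coordinate $z_j$ with $1\le j\le n$ by a unit complex number $u_j$ and fixes $z_0$. Consequently, for any $[z]\in\Pc$ and any $(u_1,\dots,u_n)\in T^n$, the set of indices $j$ for which the $j$-th coordinate vanishes is unchanged; I would phrase this as saying that the action preserves the \emph{support} of a point. (As a consistency check, the cells lie in pairwise distinct dimensions, so a homeomorphism of $\Pc$ can in any case only permute them trivially.)

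With this observation in hand, the positive-dimensional cells require no real work. Each open $2i$-cell \eqref{ciconds}, for $1\le i\le n$, is cut out purely by the support conditions $z_0=\dots=z_{n-i-1}=0$ and $z_{n-i}\neq 0$. Since multiplication by nonzero scalars (and fixing $z_0$) sends vanishing coordinates to vanishing coordinates and nonvanishing coordinates to nonvanishing coordinates, the action carries this cell into itself; applying the inverse element $(u_1^{-1},\dots,u_n^{-1})$ then shows it restricts to a bijection of the cell onto itself. Hence every cell of positive dimension is setwise invariant.

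The one case demanding an actual computation is the $0$-cell $\{[0,\dots,0,1]\}$, and I expect this to be the only genuine obstacle, minor though it is. Here the action yields $[0,\dots,0,u_n]$, and I would check that this equals $[0,\dots,0,1]$ in $\Pc=S^{2n+1}/\Tc$ by producing $t$ with $|t|=1$ and $t^{\chi_n}=u_n$; such a $t$ exists because $u_n$ is a unit complex number and $\chi_n\ge 1$, and it realises the required identity within the defining $\Tc$-equivalence. One small point I would flag explicitly is that the characteristic maps $g_i$ normalise the first nonvanishing coordinate to be a non-negative real, a gauge the action need not respect; but invariance of the CW-structure asks only that the cells be permuted, not that the parametrisations $g_i$ be equivariant, so this causes no difficulty. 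I would then conclude by assembling the pieces: every cell is invariant, hence every skeleton is an invariant subspace, and the CW-structure is therefore invariant under the residual $T^n$-action.
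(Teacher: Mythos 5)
Your proof is correct and is essentially the paper's own argument: the paper's proof consists of the single observation that the action \eqref{canact} automatically preserves the conditions of \eqref{ciconds}, which is exactly your ``support'' observation. Your explicit check of the $0$-cell (choosing $t$ with $t^{\chi_n}=u_n$) and your remark that the characteristic maps $g_i$ need not be equivariant are harmless elaborations of the same approach.
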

\begin{proof}
The action \eqref{canact} automatically preserves the conditions of 
\eqref{ciconds}.
\end{proof}

Combining \eqref{canact} and \eqref{attachmap} shows that the 
characteristic map $g_i$ induces the action 
\begin{equation}\label{repsci}
(u_1,\dots,u_n)\cdot(w_{n-i+1},\dots,w_n)\;=\;
(u_{n-i+1}u_{n-i}^{-\chi_{n-i+1}/\chi_{n-i}}w_{n-i+1},\dots,
u_nu_{n-i}^{-\chi_n/\chi_{n-i}}w_n)
\end{equation}
of $T^n$ on $D^{2i}$, for each $1\leq i\leq n$ (taking $u_0=1$ in case
$i=n$). This is the unit disc $D(\rho_i)$ of an $i$-dimensional
unitary representation $\rho_i$ of $T^n$.

We denote the CW-structure of Proposition \ref{CWstruc} by
$\Pc=e^0\cup e^2\cup\dots\cup e^{2n}$, where $e^{2i}$ is the closure
of \eqref{ciconds} in $\Pc$; the centers $[0,\dots,0,1,0,\dots,0]$ of
the cells are precisely the fixed points of the residual action.

%
%
%
%
%
%
%
%
%

\section{Generalized GKM-theory}\label{gegkmth}

In this section we recall the generalized GKM-theory of \cite{HHH},
and explain its application to Corollary \ref{invariantcells}. This
leads to a description of $E_{T^n}^*(\Pc)$ for divisive $\chi$ and
several examples of $E_{T^n}$, including equivariant 
complex $K$-theory and cobordism.

Following \cite[\S3]{HHH}, we require the space $G\acts Y$ to be
equipped with a $G$-invariant stratification $Y=\bigcup_{i\in I} Y_i$,
and write $Y_{<i}$ for the subspace ${\bigcup_{j<i}Y_j\subset Y_i}$
for every $i\in I$. We insist that each $Y_i$ contains a subspace
$F_i$, whose neighbourhood is homeomorphic to the total space $V_i$ of
a $G$-equivariant $E_G$-oriented vector bundle $\rho_i\letbe
(V_i,\pi_i,F_i)$. As usual, the {\em equivariant Euler class} 
$e_G(\rho_i)$ is defined in \smash{$E_G^{\dim V_i}(F_i)$} by 
restricting the chosen equivariant Thom class of $\rho_i$ to the 
zero section, for every $i\in I$.

We recall the following four assumptions of \cite{HHH}, which insure 
that $E_G^*(Y)$ may be computed by their methods. As we shall see, 
they are satisfied by every divisive $\Pc$.
\begin{assumptions}\label{asss}\hfill{}
\begin{enumerate}
\item[{\bf 1.}]
Each subquotient $Y_i/Y_{<i}$ is homeomorphic to the Thom space 
$\Th(\rho_i)$, with attaching maps $\varphi_i:S(\rho_i)\to Y_{<i}$.
\item[{\bf 2.}]
Every $\rho_i$ admits a $G$-equivariant direct sum decomposition
$\bigoplus_{j<i}\rho_{i,j}$ into $E_G$-oriented subbundles 
$\rho_{i,j}=(V_{i,j},\pi_{i,j},F_i)$.
\item[{\bf 3.}]
There exist $G$-equivariant maps $f_{i,j}\colon F_i\to F_j$ such that 
the restrictions $f_{i,j}\scirc\pi_{i,j}|_{S(\rho_{i,j})}$ and 
$\varphi_i|_{S(\rho_{i,j})}$ agree for every $j<i$.
\item[{\bf 4.}]
The Euler classes $e_G(\rho_{i,j})$ are not divisors of zero in 
$E^*_G(F_i)$ for any $j<i$, and are pairwise relatively prime. 
\end{enumerate}
\end{assumptions}
Note that the $\rho_{i,j}$ may have dimension $0$. Assumption
\ref{asss}.4 means that $e_G(\rho_{i,j})$ divides $y$ for each $j$ if
and only if $e_G(\rho_i)$ divides $y$, for any $y\in E^*_G(F_i)$.

Now let $\iota^*\colon E_G^*(Y)\to\prod_i E_G^*(F_i)$ be the 
homomorphism induced by the inclusion $\coprod_iF_i\subset Y$.
\begin{thm}[{\cite[Theorem 3.1]{HHH}}] \label{th:GKM} 
Let $Y$ be a $G$-space satisfying the four Assumptions~\ref{asss}; then 
$\iota^*$ is monic, and has image
\begin{equation*}
\varGamma_{\,Y}\:\mbox{\em $\letbe$}\;\left\{y=(y_i):
e_G(\text{$\rho_{i,j})$ divides $y_i-f_{i,j}^*(y_j)$ 
for all $j<i$}\right\}\;\leq\;\prod_i E^*_G(F_i).
\qedhere\popQED
\end{equation*}
\end{thm}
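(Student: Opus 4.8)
The plan is to induct over the (well-ordered) index set $I$, computing $E^*_G(Y_i)$ from $E^*_G(Y_{<i})$ via the cofibre sequence $Y_{<i}\to Y_i\to Y_i/Y_{<i}$. By Assumption \ref{asss}.1 the cofibre is $\Th(\rho_i)$, so the long exact sequence of the pair $(Y_i,Y_{<i})$, combined with the Thom isomorphism $\tilde E^*_G(\Th(\rho_i))\cong E^{*-d_i}_G(F_i)$ (with $d_i=\dim\rho_i$), takes the form
\[
\cdots\to E_G^{\,n-d_i}(F_i)\xrightarrow{\;\alpha\;}E_G^n(Y_i)
\xrightarrow{\;r\;}E_G^n(Y_{<i})\xrightarrow{\;\delta\;}
E_G^{\,n+1-d_i}(F_i)\to\cdots.
\]
Writing $\iota_i^*\colon E^*_G(Y_i)\to\prod_{k\leq i}E^*_G(F_k)$ for the restriction, the inductive claim is that $\iota_i^*$ is monic with image $\varGamma_{Y_i}$; the assertion of the theorem is the case $i=\max I$ (for infinite $I$ one passes to the limit, with the usual attention to $\lim^1$, although our application to $\Pc$ needs only a finite filtration).

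First I would collapse the long exact sequence into short exact sequences by showing $\delta=0$. The composite of $\alpha$ with restriction to the zero section $F_i$ is multiplication by $e_G(\rho_i)$; by Assumption \ref{asss}.2 and multiplicativity of Euler classes, $e_G(\rho_i)=\prod_{j<i}e_G(\rho_{i,j})$, which is a non-zero-divisor by Assumption \ref{asss}.4. Hence this composite, and therefore $\alpha$, is injective, forcing $\delta=0$ and yielding
\[
0\to E_G^{*-d_i}(F_i)\xrightarrow{\;\alpha\;}E_G^*(Y_i)
\xrightarrow{\;r\;}E_G^*(Y_{<i})\to0.
\]
Injectivity of $\iota_i^*$ is then a short chase: if $y|_{F_k}=0$ for all $k\leq i$, the inductive hypothesis applied to $r(y)$ gives $r(y)=0$, so $y=\alpha(x)$; restricting to $F_i$ gives $e_G(\rho_i)\,x=0$, whence $x=0$ since $e_G(\rho_i)$ is a non-zero-divisor.

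It remains to identify the image with $\varGamma_{Y_i}$. For the inclusion $\subseteq$, take $y\in E^*_G(Y_i)$ and set $y_k=y|_{F_k}$. Restricting $y$ to the disc bundle $D(\rho_i)\simeq F_i$ and then to the sub-sphere-bundle $S(\rho_{i,j})$, Assumption \ref{asss}.3 identifies $\varphi_i|_{S(\rho_{i,j})}$ with $f_{i,j}\scirc\pi_{i,j}$ and so gives $\pi_{i,j}^*(y_i-f_{i,j}^*y_j)=0$ in $E^*_G(S(\rho_{i,j}))$; the Gysin sequence of $\rho_{i,j}$, whose Euler class is a non-zero-divisor, has $\ker\pi_{i,j}^*=e_G(\rho_{i,j})\,E^*_G(F_i)$, so $e_G(\rho_{i,j})$ divides $y_i-f_{i,j}^*y_j$, which is the defining condition of $\varGamma_{Y_i}$ (the conditions among the $y_j$, $j<i$, coming from the inductive hypothesis for $r(y)$). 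For $\supseteq$, given $(y_k)\in\varGamma_{Y_i}$ the truncation $(y_j)_{j<i}$ lies in $\varGamma_{Y_{<i}}$, hence lifts to some $\bar y\in E^*_G(Y_{<i})$; choose $\tilde y\in E^*_G(Y_i)$ with $r(\tilde y)=\bar y$. Applying the $\subseteq$ direction to $\tilde y$ and subtracting shows $e_G(\rho_{i,j})$ divides $y_i-\tilde y|_{F_i}$ for every $j<i$, and here is the crux: since the $e_G(\rho_{i,j})$ are pairwise relatively prime (Assumption \ref{asss}.4, as recorded in the remark following it), their product $e_G(\rho_i)$ divides $y_i-\tilde y|_{F_i}$, say $y_i-\tilde y|_{F_i}=e_G(\rho_i)\,x$. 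Then $y\letbe\tilde y+\alpha(x)$ satisfies $r(y)=\bar y$ and $y|_{F_i}=y_i$, so $\iota_i^*(y)=(y_k)$. The main obstacle is exactly this last coprimality step: it is what converts the separate divisibilities dictated by the individual attaching data into the single divisibility by $e_G(\rho_i)$ required to exploit the short exact sequence, and it is the only place where the pairwise-relative-primality in Assumption \ref{asss}.4 is essential.
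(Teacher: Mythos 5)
Your argument is correct, and it is essentially the proof of the cited result \cite[Theorem 3.1]{HHH}: the paper gives no proof of this theorem itself, quoting it from that reference, and your induction --- collapsing the long exact sequence of $(Y_i,Y_{<i})$ using that $e_G(\rho_i)=\prod_{j<i}e_G(\rho_{i,j})$ is a non-zero-divisor, extracting the divisibility conditions from the Gysin sequence of $\rho_{i,j}$ via Assumption~\ref{asss}.3, and invoking the pairwise coprimality of Assumption~\ref{asss}.4 to convert divisibility by each $e_G(\rho_{i,j})$ into divisibility by $e_G(\rho_i)$ so that the lift can be corrected by $\alpha(x)$ --- is precisely how that theorem is established there. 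The caveats you flag are the right ones: $Y_{<i}$ inherits the Assumptions so the inductive hypothesis applies, and for infinite $I$ one needs the surjectivity of the restriction maps (Mittag-Leffler, hence vanishing $\lim^1$) to pass to the limit, which is irrelevant for the finite filtration of $\Pc$ used in this paper.
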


As in several of the examples in \cite{HHH}, our application to
Corollary \ref{invariantcells} involves a $T^n$-invariant skeletal
filtration. Specifically, $Y_i=\cup_{j\leq i}e^{2j}$ is the
$2i$--skeleton of $\Pc$ for $0\leq i\leq n$, and the $F_i\subset Y_i$
contain only the centers of the cells $e^{2i}$. These are the
fixed points of the $T^n$-action, and the $T^n$-equivariant bundles
$\rho_i$ reduce to $i$--dimensional complex representations, which are
canonically $E_{T^n}$-oriented. Assumption \ref{asss}.1 is then
satisfied, where the r\^oles of the $\varphi_i$ are played by the 
restrictions of the $g_i$ of Proposition \ref{CWstruc} to $S^{2i-1}$. 
The equivariant Euler classes $e_{T^n}(\rho_i)$ lie in the coefficient 
ring $E_{T^n}^*$.

In order to check Assumption \ref{asss}.2, we refer back to
\eqref{repsci}. Each $\rho_i$ decomposes as a sum
$\bigoplus_{j<i}\rho_{i,j}$ of $1$--dimensionals, where $\rho_{i,j}$
is defined by
\begin{equation}\label{repsdij}
(u_1,\dots,u_n)\cdot w_{n-j}\;=\;
u_{n-j}u_{n-i}^{-\chi_{n-j}/\chi_{n-i}}w_{n-j}
\end{equation}
for $0\leq j< i\leq n$. These decompositions respect the canonical 
$E_{T^n}$-orientations, by definition. 

For Assumption \ref{asss}.3, the maps $f_{i,j}$ are necessarily constant 
and equivariant, so the restrictions to $S(\rho_{i,j})$ of 
$f_{i,j}\scirc\pi_{i,j}$ and $g_i$ agree, for every $j<i$.

Before confirming Assumption~\ref{asss}.4, recall \cite{hus:fb} that the 
complex representation ring of $T^n$ is isomorphic to the Laurent 
polynomial algebra
\begin{equation}\label{deflpa}
R(T^n)\;\cong\;S_\Z^\pm(\alpha)
\;\letbe\;\Z[\alpha_1,\dots,\alpha_n]_{(\alpha_1\cdots\alpha_n)}
\end{equation}
on generators $\alpha_j$, which represent the $1$--dimensional 
irreducibles defined by projection onto the $j$th coordinate circle.
In particular, \eqref{repsdij} states that 
\begin{equation}\label{dijalphas}
\rho_{i,j}\;\cong\;\alpha_{n-j}\alpha_{n-i}^{-\chi_{n-j}/\chi_{n-i}}
\end{equation}
(taking $\alpha_0=1$ in case $i=n$).
Since equivariant Euler classes behave exponentially, $e_{T^n}$ is 
determined on any representation by its value on the monomials 
\smash{$\alpha^J\letbe\alpha_1^{j_1}\cdots\alpha_n^{j_n}$}, which 
form an additive basis for $R(T^n)$ as $J$ ranges over $\Z^n$.

In the case of $K$-theory, the coefficient ring $K_{T^n}^*$ is
isomorphic to $R(T^n)$ in even dimensions, and is zero in odd
\cite{Seg68}. Periodicity may be made explicit by incorporating the
Bott element $z$ into \eqref{deflpa} and writing 
\begin{equation}\label{defktn}
K_{T^n}^*\;\cong\;S_{K^*}^\pm(\alpha)\;\cong\;R(T^n)[z,z^{-1}]\,, 
\end{equation}
where $\alpha^J$ (for any $J$) and $z$ have cohomological dimensions
$0$ and $-2$ respectively. The notation reflects the fact that
the coefficient ring $K^*$ is isomorphic to $\Z[z,z^{-1}]$. Then 
$e_{T^n}(\alpha^J)=1-\alpha^{\pm J}$, where both choices of sign occur 
in the literature. Some authors even prefer $z^{-1}(1-\alpha^{\pm J})$, 
to achieve greater consistency with cobordism and cohomology by 
realizing the Euler class in cohomological dimension 2; here, 
for notational convenience, we employ $1-\alpha^J$. The kernel 
of the augmentation $K^*_{T^n}\to K^*$ is the ideal 
$(1-\alpha_1,\dots,1-\alpha_n)$.

In the case of complex cobordism, the coefficient ring $MU_{T^n}^*$ is
an algebra over $L^*$, and is freely generated as an $L^*$-module by
even-dimensional elements \cite{com:cce}. The Euler classes
$e_{T^n}(\alpha^J)$ are non-zero elements of $MU_{T^n}^2$, and are
denoted by $e(\alpha^J)$ in the calculations of \cite{sin:cce} and
elsewhere; they generate the positive--dimensional subring
$MU_{T^n}^{>0}$. The kernel of the augmentation $MU_{T^n}^*\to L^*$ is
the ideal $(e(\alpha_1),\dots,e(\alpha_n))$ \cite{coma:ctc}.

In the case of Borel equivariant integral cohomology, the 
coefficient ring $H_{T^n}^*$ is isomorphic to the polynomial algebra
\begin{equation}\label{defpolyalgx}
H^*(BT^n;\Z)\;\cong\;S_\Z(x)\;\letbe\;\Z[x_1,\dots,x_n]
\end{equation}
on $2$--dimensional generators $x_j$. Then 
$e_{T^n}(\alpha^J)=\sum_Jj_kx_k$ for any $J$; in particular, the 
equation $e_{T^n}(\alpha_i)=x_i$ may be taken to define $x_i$ for 
every $1\leq i\leq n$. The kernel of of the augmentation 
$H^*_{T^n}\to H^*$ is the ideal $(x_1,\dots,x_n)$.

So from \eqref{dijalphas}, we deduce that 
\begin{equation}\label{etdij}
e_{T^n}(\rho_{i,j})\;=\;\left\{\begin{array}{ll}
\smash{1-\alpha_{n-j}\alpha_{n-i}^{-\chi_{n-j}/\chi_{n-i}}\;\;
\text{in $K_{T^n}^0$}}\\
\rule{0em}{1.3em}e(\alpha_{n-j}\alpha_{n-i}^{-\chi_{n-j}/\chi_{n-i}})\;\;
\text{in $MU_{T^n}^2$}\\
\rule{0em}{1.3em}x_{n-j}-(\chi_{n-j}/\chi_{n-i})x_{n-i}\;\;
\text{in $H_{T^n}^2$}
\end{array}\right.
\end{equation}
for $0\leq j<i\leq n$. 

In each of these three cases, the ambient ring is an integral domain;
for $MU_{T^n}^*$, this is proven in \cite[Corollary 5.3]{sin:cce}. So
none of the Euler classes of \eqref{etdij} are divisors of zero. The
following criteria address the remaining parts of Assumption
\ref{asss}.4.
\begin{cria}[{\cite[Lemma 5.2]{HHH}}]\label{cricoll} 
Given any finite set of non-zero $\alpha^J$, their equivariant Euler 
classes are pairwise relatively prime in $K_{T^n}^*$ or $MU_{T^n}^*$
whenever no two $J$ are linearly dependent in $\Z^n$;
the additional condition that no prime $p$ divides any two $J$ is
required in $H_{T^n}^*$. 
\end{cria}

For $\rho_{i,j}$ with $i<n$, \eqref{dijalphas} shows that $J$ has only
two non-zero entries, namely $1$ in position $n-j$ and
$-\chi_{n-j}/\chi_{n-i}$ in position $n-i$; for $\rho_{n,j}$, there is
a single $1$ in position $n-j$. So Criteria \ref{cricoll} confirm that
the Euler classes $e_{T^n}(\rho_{i,j})$ are pairwise relatively prime in
all three cases, and therefore that Assumption~\ref{asss}.4 also
holds.

We may now conclude our first description of $E_{T^n}^*(\Pc)$.
\begin{prop}\label{gkmdivwps}
For any divisive weighted projective space, $E_{T^n}^*(\Pc)$ is
isomorphic as an $E^*_{T^n}$-algebra to the subring 
\[
\varGamma_{\,\bP(\chi)}\;=\;\left\{y:e_{T^n}(\text{$\rho_{i,j})$ divides
  $y_i-y_j$ for all $j<i$} \right\}\;\leq\;\prod_i
E^*_{T^n},
\]
in each of the cases $E=K$, $\MU$, $H$.
\end{prop}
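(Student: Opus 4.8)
The plan is to deduce Proposition~\ref{gkmdivwps} directly from Theorem~\ref{th:GKM} by verifying its four hypotheses for the skeletal filtration of $\Pc$ and then simplifying the resulting formula. Most of this verification has in fact already been carried out in the running discussion following Theorem~\ref{th:GKM}: the skeleta $Y_i=\bigcup_{j\leq i}e^{2j}$ furnish a $T^n$-invariant stratification by Corollary~\ref{invariantcells}, the fixed subspaces $F_i$ are the single cell-centers, and the normal data $\rho_i$ are the complex representations of \eqref{repsci}, canonically $E_{T^n}$-oriented. I would recall that Assumption~\ref{asss}.1 holds because $Y_i/Y_{<i}\cong\Th(\rho_i)$ with attaching maps the restrictions of the $g_i$ (Proposition~\ref{CWstruc}); Assumption~\ref{asss}.2 holds via the splitting $\rho_i\cong\bigoplus_{j<i}\rho_{i,j}$ of \eqref{repsdij}; Assumption~\ref{asss}.3 holds because each $F_i$ is a point, forcing the $f_{i,j}$ to be the unique equivariant constant maps; and Assumption~\ref{asss}.4 holds by the domain property of the three coefficient rings together with Criteria~\ref{cricoll} applied to the weight data recorded in \eqref{dijalphas}, exactly as argued in the two paragraphs preceding this proposition. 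Each of these points applies simultaneously to $E=K$, $\MU$, and $H$, since \eqref{etdij} exhibits the requisite Euler classes in all three theories.

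With the four assumptions in hand, Theorem~\ref{th:GKM} asserts that $\iota^*$ is monic onto
\[
\varGamma_{\,\bP(\chi)}\;=\;\bigl\{y=(y_i):
e_{T^n}(\rho_{i,j})\text{ divides }y_i-f_{i,j}^*(y_j)\text{ for all }j<i\bigr\}.
\]
The only remaining task is to rewrite this in the stated form. Since every $F_i$ is a single point, the restriction map identifies each factor $E_{T^n}^*(F_i)$ with the coefficient ring $E_{T^n}^*$, so the product $\prod_i E_{T^n}^*(F_i)$ becomes $\prod_i E_{T^n}^*$. Under this identification the pullback $f_{i,j}^*$ along the constant map $f_{i,j}\colon F_i\to F_j$ is the identity on $E_{T^n}^*$, whence $f_{i,j}^*(y_j)=y_j$ and the divisibility condition collapses to ``$e_{T^n}(\rho_{i,j})$ divides $y_i-y_j$''. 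This yields precisely the subring displayed in the proposition, and the isomorphism is one of $E_{T^n}^*$-algebras because $\iota^*$ is induced by an equivariant map over the one-point space and hence respects the $E_{T^n}^*$-algebra structure coming from $\Pc\to *$.

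I do not expect a serious obstacle, as this is an application of a machine already assembled in the excerpt rather than a fresh argument. The one point deserving genuine care is the reduction $f_{i,j}^*=\mathrm{id}$: because both $F_i$ and $F_j$ are the one-point fixed sets, the constant equivariant map between them induces the identity on coefficient rings, and it is this collapse that converts the abstract GKM condition into the clean ``$y_i-y_j$'' form. I would state this identification explicitly rather than leave it implicit. The uniform treatment of the three theories also merits a brief remark: the verifications of Assumptions~\ref{asss}.1--3 are insensitive to the choice of $E$, while Assumption~\ref{asss}.4 is the only place where the distinct behaviour of $K$, $\MU$, and $H$ enters, and there Criteria~\ref{cricoll} already packages the three cases together, so no case-by-case labour beyond \eqref{etdij} is needed.
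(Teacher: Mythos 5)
Your proposal is correct and follows essentially the same route as the paper: the paper's proof simply observes that the preceding analysis in Section~\ref{gegkmth} (verifying Assumptions~\ref{asss}.1--4 for the skeletal filtration) lets Theorem~\ref{th:GKM} apply directly, with the $E^*_{T^n}$-algebra compatibility being immediate. Your additional explicit remark that $f_{i,j}^*=\mathrm{id}$ because the $F_i$ are one-point fixed sets---which converts $y_i-f_{i,j}^*(y_j)$ into $y_i-y_j$---is exactly the step the paper leaves implicit, and it is a worthwhile clarification rather than a deviation.
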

\begin{proof}
Our preceding analysis shows that Theorem \ref{th:GKM} applies
directly to the skeletal filtration. Compatibility with the 
$E^*_{T^n}$-algebra structure follows immediately. 
\end{proof}
Proposition \ref{gkmdivwps} shows that $E_{T^n}^*(\Pc)$ is zero in odd 
dimensions.

The idea behind Theorem \ref{supmain} is to convert Proposition 
\ref{gkmdivwps} into a form more directly related to the properties 
of the fan $\vsc\letbe\varSigma_{\bP(\chi)}$.

%
%
%
%
%
%
%
%
%

\section{Piecewise algebra}\label{pial}

Before stating Theorem \ref{supmain}, we introduce certain algebraic
and geometric constructions that are associated to fans by the theory
of diagrams. They are motivated by modern approaches to homotopy
theory, and provide a common language in which to address the cases
under discussion.

A rational fan $\varSigma$ in $\R^n$ determines a small category
$\cats$, whose objects are the cones $\sigma$ and morphisms their
inclusions $i_{\sigma,\tau}\colon\sigma\subseteq\tau$. The zero cone
$\{0\}$ is initial, and the maximal cones admit only identity
morphisms. The opposite category $\catsop$ has morphisms
$p_{\tau,\sigma}\colon\tau\supseteq\sigma$, and $\{0\}$ is final.

For $0\leq d\leq n$, the set of $d$--dimensional cones is denoted by
$\varSigma(d)\subseteq\varSigma$. The elements of $\varSigma(1)$ are
known as \emph{rays}, and are represented by primitive vectors 
$v_1$, \dots, $v_m$, where $m$ denotes the cardinality of 
$\varSigma(1)$ henceforth. Each cone may be identified by its
generating rays $v_{j_1}$, \dots, $v_{j_k}$, and interpreted as a
subset $\sigma\subseteq\varSigma(1)$. The cardinality $k=|\sigma|$
coincides with the dimension $d=\dim(\sigma)$ if and only if the cone 
$\sigma$ is simplicial.

Every $d$-dimensional $\sigma$ gives rise to an $(n-d)$--dimensional
subspace $\R_{\sigma^\perp}\!\subseteq\R^n$, by forming the orthogonal
complement of its linear hull $\R_\sigma$. The rationality of $\sigma$ 
implies that $\R_{\sigma^\perp}\1\cap\Z^n$ has rank $(n-d)$, and admits a 
basis $w_1$, \dots, $w_{n-d}$ of integral vectors; it is unique up to 
the action of $\GL(n-d,\Z)$, and determines the linear forms
\begin{equation}\label{linforms}
w_c^{tr}x\;=\;w_{c,1}x_1+\cdots+w_{c,n}x_n\quad
\text{for $\;\,1\leq c\leq n-d$}.
\end{equation}
The intersection of their kernels is $\R_\sigma$, and there exists a 
splitting $\R^n\cong\R_{\sigma^\perp}\times\R_\sigma$. It is convenient to 
interpret $\R^n$ as the Lie algebra of $T^n$ and write the associated 
splitting as 
\begin{equation}\label{tnsplit}
T^n\;\cong\;T_{\sigma^\perp}\times T_\sigma,
\end{equation}
where the Lie algebra of $T_{\sigma^\perp}$ is spanned by the $w_c$,
for any cone $\sigma$. Thus $T_{\sigma^\perp}=\{1\}$ for 
top-dimensional cones, and $T_{\{0\}^\perp}=T^n$. 

\begin{defns}\label{defpd}
  A \emph{$\varSigma$-diagram} in a category $\cat{c}$ is a covariant
  functor $F\colon\cats\ra\cat{c}$; similarly, a 
  \emph{$\varSigma^{op}$-diagram} (or \emph{contravariant
  $\varSigma$-diagram}) is a covariant functor 
  $F\colon\catsop\ra\cat{c}$.
\end{defns} 
We are interested in diagrams for which one or both of $\lim F$ 
and $\col F$ exist in $\cat{c}$.

Definitions \ref{defpd} are motivated by a familiar diagram in
$\cat{top}$, which underlies the construction of the toric variety
$\xvs$ as a topological $T^n$-space. It is denoted by
$U\colon\cats\ra T^n$-\cat{top}, and uses the dual cones 
$\sigma^\vee$ in the lattice $M\letbe(\Z^n)^\vee$; it is given by
\begin{equation}\label{defxvsu}
U(\sigma)\;=\;U_\sigma\;\letbe\;
\Hom(\sigma^\vee\2\cap M\om,\om\C^\times\2\cup\{0\})
\sands U(i_{\sigma,\tau})\;=\;j_{\sigma,\tau}\,, 
\end{equation}
where $\Hom(-)$ denotes the affine variety of semigroup
homomorphisms, and $j_{\sigma,\tau}\colon U_\sigma\ra U_\tau$ 
is induced by $i^\vee_{\sigma,\tau}\colon\tau^\vee\ra\sigma^\vee$. 
The standard description of $\xvs$, as given by 
\cite[\S 1.4]{ful:itv}, for example, may then be expressed as 
the colimit $\col U$ in $T^n$-\cat{top}.

In fact $T^n\acts U_\sigma$ is $T^n$-equivariantly homotopy 
equivalent to $T^n\acts T^n/T_\sigma$ for every cone $\sigma$ 
\cite[Proposition 12.1.9, Lemma 3.2.5]{colisc:tv}. So $U$ is 
objectwise equivariantly equivalent to the diagram 
$V\colon\cats\ra T^n$-\cat{top}, given by 
\begin{equation}\label{defdiagv}
V(\sigma)\;=\;T^n/T_\sigma\sands 
V(i_{\sigma,\tau})\;=\;r_{\sigma,\tau}\,, 
\end{equation}
where $r_{\sigma,\tau}$ is the projection induced by the inclusion
$T_\sigma\leq T_\tau$. Since $U$ is cofibrant, it follows that 
$\hocolim V$ is equivariantly homotopy equivalent to 
$\col U=\xvs$. Diagram \eqref{defdiagv} first appeared in 
\cite{wezizi:hcc}, and more recently in \cite{fra:dtv}.

We may now describe our basic examples of 
$\varSigma^{op}$-diagrams in the category $\catgca_E$ of graded 
commutative $E^*_{T^n}$-algebras.
\begin{defn}\label{etdiag}
For any complex-oriented equivariant cohomology theory 
$E^*_{T^n}(-)$, the diagram 
$\EV\colon\catsop\to\catgca_E$ has
\begin{equation}\label{polysig}
\EV(\sigma)\;=\;E^*_{T^n}(T^n/T_\sigma)\sands
\EV(p_{\tau,\sigma})\;=\;r^*_{\sigma,\tau}\,.
\end{equation}
The limit $P_E(\varSigma)$ of $\EV$ is the $E^*_{T^n}$-algebra 
of \emph{piecewise coefficients} on $\varSigma$. 
\end{defn}
\begin{rems}
By definition, $P_E(\varSigma)$ is an $E^*_{T^n}$-subalgebra of
$\prod_\sigma\EV(\sigma)$, so every piecewise coefficient $f$ has
one component $f^\sigma$ for each cone $\sigma$ of $\varSigma$. 
If $\sigma$ is top dimensional, then $T_\sigma=T^n$ and $f^\sigma$ is 
a genuine element of $E^*_{T^n}$; on the other hand, 
$T_{\{0\}}=\{1\}$ and $f^{\{0\}}$ lies in $E^*$. The components of 
$f$ are  compatible under the homomorphisms $j^*_{\tau,\sigma}$, and 
are congruent modulo the augmentation ideal. Sums and products of 
piecewise coefficients are taken conewise, and 
$E_{T^n}^*\leq P_E(\varSigma)$ occurs as the subalgebra of 
\emph{global coefficients}, whose components agree on every cone. 
In particular, it contains the global constants $0$ and $1$, which 
act as zero and unit respectively.
\end{rems}

In many cases, $\EV$ and $P_E$ may be described more explicitly, as 
follows.

Suppose that $\rho$ has codimension $1$, and that $w_1$ 
is a primitive vector generating $\R_{\rho^\perp}$. The splitting 
\eqref{tnsplit} ensures that the natural action of $T^n$ on 
$T^n/T_\rho$ may then be identified with the unit circle 
$S(\eta)$ of the irreducible representation $\eta\letbe\alpha^{w_1}$, 
on which the circle $T_{\rho^\perp}$ acts freely and the 
$(n-1)$--torus $T_\rho$ acts trivially. The inclusion of $S(\eta)$ into 
the unit disc $D(\eta)$ determines the equivariant cofiber sequence
\[
S(\eta)\;\lra\; D(\eta)\;\lra\; S^{\eta},
\] 
where $S^{\eta}$ denotes the one-point compactification 
$T^n\acts D(\eta)/S(\eta)$. Applying $E^*_{T^n}(-)$ yields the long 
exact sequence
\begin{equation}\label{lessd}
\cdots\;\lra\; E^*_{T^n}(S^{\eta})\;\stackrel{\cdot e}{\lra}\; 
E^*_{T^n}(D(\eta))
\;\lra\; E^*_{T^n}(S(\eta))\;\lra\;\cdots\;.
\end{equation}
Since $D(\eta)$ is equivariantly contractible and the Thom 
isomorphism applies to the Thom space $S^{\eta}$, the 
homomorphism $\cdot e$ may be interpreted as multiplication by the 
Euler class $e_{T^n}(\eta)$. So $\cdot e$ is monic for each of $E=K$,
$\MU$, $H$; thus \eqref{lessd} becomes short exact, yielding 
isomorphisms
\[
E^*_{T^n}/(e_{T^n}(\eta))\;\cong\;
E^*_{T^n}(S(\eta))\;\cong\; E^*_{T^n}(T^n/T_\rho)\;=\;\EV(\rho)
\]
of $E_{T^n}^*$-algebras.

This calculation extends to lower dimensional cones $\tau$ by 
iteration. If $\tau$ has dimension $k$, then the natural action of $T^n$ 
on $T^n/T_\tau$ may be identified with the product 
$S(\eta_1)\times\dots\times S(\eta_{n-k})$, where 
$\eta_c$ denotes the irreducible $\alpha^{w_c}$ for $1\leq c\leq n-k$. 
The $(n-k)$--torus $T_{\tau^\perp}$ acts freely, and the $k$--torus 
$T_\tau$ acts trivially, yielding isomorphisms
\begin{equation}\label{eisod}
E^*_{T^n}/(e_{T^n}(\eta_1),\dots,e_{T^n}(\eta_{n-k}))
\;\cong\; E^*_{T^n}(S(\eta_1)\times\dots\times S(\eta_{n-k}))
\;\cong\; E^*_{T^n}(T^n/T_\tau)\;=\;\EV(\tau). 
\end{equation}
If $\sigma\subset\tau$ has dimension $d<k$, then 
$\R_{\sigma^\perp}$ arises from $\R_{\tau^\perp}$ by adjoining additional 
basis vectors $w_{n-k+1}$, \dots, $w_{n-d}$, and the projection
\[
q_{\tau,\sigma}\colon 
E^*_{T^n}/(e_{T^n}(\eta_1),\dots,e_{T^n}(\eta_{n-k}))
\;\lra\; E^*_{T^n}/(e_{T^n}(\eta_1),\dots,e_{T^n}(\eta_{n-d}))
\]
corresponds to $r^*_{\tau,\sigma}\colon 
E^*_{T^n}(T^n/T_\tau)\ra E^*_{T^n}(T^n/T_\sigma)$
under \eqref{eisod}.

We conclude that \eqref{polysig} may be rewritten as
\begin{equation}\label{qolysig}
\EV(\sigma)\;=\;E^*_{T^n}/(e_{T^n}(\eta_1),\dots,e_{T^n}(\eta_{n-d}))
\sands
\EV(p_{\tau,\sigma})\;=\; q_{\tau,\sigma},
\end{equation}
and proceed to describing the examples $E=K$, $\MU$, $H$ in these
terms.

For $E=K$, we work with graded commutative algebras over the 
Laurent polynomial ring $S_{K^*}^\pm(\alpha)$ of \eqref{defktn}.
\begin{exa}\label{defplp}
The \emph{Laurent polynomial diagram}
$KV\colon\catsop\ra\catgca_K$ has
\begin{equation}\label{laurpolysig}
KV(\sigma)\;=\;S_{K^*}^\pm(\alpha)/J_\sigma\sands
KV(p_{\tau,\sigma})\;=\;q_{\tau,\sigma}\,,
\end{equation}
where $J_\sigma$ denotes the ideal generated by the Euler classes
$1-\alpha^{w_c}$ arising from the $w_c$ of \eqref{linforms} for $1\leq
c\leq n-d$. In this case, $P_K(\alpha;\varSigma)$ is the
$S_{K^*}^\pm(\alpha)$-algebra of \emph{piecewise Laurent polynomials}
on $\varSigma$.
\end{exa}

For $E=\MU$, we work with graded commutative algebras over 
$\MU_{T^n}^*$, whose structure is unknown. We therefore rely on
the fact that every element of $\MU_{T^n}^*$ is an even-dimensional 
linear combination of generators over $L^*$, and refer to 
$\MU_{T^n}^*$ as the ring of \emph{$T^n$-cobordism forms}. Such forms 
may not be representable by genuine $T^n$-manifolds, as exemplified 
by the Euler class $e(\alpha^J)$, whose homological dimension is 
$-2$. This phenomenon arises from the lack of equivariant 
transversality, and the consequent failure of the Pontryagin-Thom 
construction to be epimorphic.

\begin{exa}\label{defbf}
The \emph{cobordism form diagram} 
$\MU V\colon\catsop\ra\catgca_{MU}$ has
\begin{equation}\label{borforsig}
  \MU V(\sigma)\;=\;\MU_{T^n}^*/J_\sigma\sands
  \MU V(p_{\tau,\sigma})\;=\;q_{\tau,\sigma}\,,
\end{equation}
where $J_\sigma$ denotes the ideal generated by the Euler classes
$e(\alpha^{w_c})$ for $1\leq c\leq n-d$. In this case, 
$P_{M\1U}(\varSigma)$ is the $\MU_{T^n}^*$-algebra of 
\emph{piecewise cobordism forms} on $\varSigma$.
\end{exa}

For $E=H$, we work with graded commutative algebras over the 
polynomial algebra $S_\Z(x)$ of \eqref{defpolyalgx}.
\begin{exa}\label{defpp}
The \emph{polynomial diagram} $HV\colon\catsop\ra\catgca_H$ 
has
\begin{equation*}
HV(\sigma)\;=\;S_\Z(x)/J_\sigma\sands
HV(p_{\tau,\sigma})\;=\;q_{\tau,\sigma}\,,
\end{equation*}
where $J_\sigma$ denotes the ideal generated by the Euler classes 
$w_c^{tr}x$ of \eqref{linforms} for $1\leq c\leq n-d$. In this case, 
$P_H(x;\varSigma)$ is the $S_\Z(x)$-algebra of 
\emph{piecewise polynomials} on $\varSigma$.
\end{exa}
In \cite{bafrra:ecr}, $P_H(x;\varSigma)$ is referred to as 
$\PP_\Z(x;\varSigma)$.

In Section \ref{coboco} we invest $H^*_{T^n}(-)$ with various
commutative integer-graded rings of coefficients $R$, which are zero
in odd dimensions. The standard example $\Z$ is concentrated in
dimension $0$, but we also consider $K\Q^*\letbe \Q[z,z^{-1}]$, where
$z$ has cohomological dimension $-2$, and
\[
\HwMU_*\;=\;\HwMU^{-*}\;\letbe\; H_*(MU)
\;\cong\;S_\Z(b_j:j\geq 1),
\]
where $b_j$ has cohomological dimension $-2j$ for every $j$. The
corresponding spectrum is denoted by $E=\HR$, and the analogue of 
diagram \eqref{laurpolysig} by $\HR V$; we continue to abbreviate 
$H\Z$ to $H$ in the standard example.

The equivariant coefficient ring $\HR^*_{T^n}=H^*(BT^n;R)$ must be 
identified with the completed tensor product $H^*(BT^n)\hot R$.
When $R=\Z$, the outcome is $H^*(BT^n)$; but for $K\Q^*$ 
or $\HwMU^*$, the ring 
\begin{equation}\label{hrcoeffs}
H^*(BT^n)\hot R\;\cong\;R[[x]]
\end{equation}
is an algebra of formal power series. It follows that 
$\HR V(\sigma)\cong R[[x]]/J_\sigma$, and that 
$P_{H\!R}(x;\varSigma)$ is the $R[[x]]$-algebra of 
\emph{piecewise formal power series} on $\varSigma$.

We require two further $\varSigma^{op}$-diagrams, obtained
by applying Definition \eqref{etdiag} to the Borel equivariant 
cohomology theories $E^*(ET^n\times_{T^n}-)$. In these cases the 
coefficients $E^*(BT^n)$ are also rings of formal power series.

The first such example identifies $K^*(BT^n)$ with  
$K^*[[\gamma]]$, on $0$--dimensional indeterminates $\gamma_j$ 
for $1\leq j\leq n$.
\begin{exa}\label{defbktd}
The \emph{Borel $K$-theory diagram}
$\KBV\colon\catsop\ra\catgca_{K\!_B}$ has
\begin{equation}\label{bksig}
\KBV(\sigma)\;=\;K^*[[\gamma]]\om/J_\sigma,
\end{equation}
where $J_\sigma$ denotes the ideal generated by the Euler classes
$(1+\gamma)^{w_c}-1$ for $1\leq c\leq n-d$ and 
$\KBV(p_{\tau,\sigma})$ is the natural projection. The limit 
$P_{K\!_B}(\gamma;\varSigma)$ is the $K^*[[\gamma]]$-algebra 
of piecewise formal power series on $\varSigma$.
\end{exa}

The second example identifies $\MU^*(BT^n)$ with 
$L^*[[u]]$, on indeterminates $u_j$ of cohomological dimension 
$2$ for $1\leq j\leq n$.
\begin{exa}\label{defbcd}
The \emph{Borel cobordism diagram}
$\MUBV\colon\catsop\ra\catgca_{MU\!_B}$ has
\begin{equation}\label{bmusig}
  \MUBV(\sigma)\;=\;L^*[[u]]\om/J_\sigma,
\end{equation}
where $J_\sigma$ denotes the ideal generated by the Euler classes
$[w_{c,1}](u_1)+_U\dots +_U[w_{c,n}](u_n)$ (expressed in 
terms of the universal formal group law $U$ \cite{haz:fga} over 
$L^*$), and $\MUBV(p_{\tau,\sigma})$ is the natural 
projection. The limit $P_{M\1U\!_B}(u;\varSigma)$ is the 
$L^*[[u]]$-algebra of piecewise formal power series on 
$\varSigma$.
\end{exa}

In fact $P_{M\1U}(\varSigma)$ and $P_{M\1U\?_B}(u;\varSigma)$ are 
the universal piecewise coefficient and piecewise formal power series 
algebras on $\varSigma$ respectively, for complex-oriented 
$E^*_{T^n}(-)$ and $E^*(ET^n\times_{T^n}-)$. The cases 
$P_K(\alpha;\varSigma)$ and $P_{K_B}(\gamma;\varSigma)$ 
correspond to the \emph{multiplicative} formal group law, classified 
by the equivariant Todd genus. Similarly, $P_{H\!R}(x;\varSigma)$ 
corresponds to the \emph{additive} formal group law, classified by 
the Thom genus.

\begin{rem}\label{functorial}
A map of fans $\xi\colon\varSigma'\to\varSigma$ may be interpreted as
an $n\times n'$ integer matrix $\xi$, for which the image $\xi(\sigma')$ 
of any cone $\sigma'$ is contained in some cone $\sigma$. Let 
$\xi^\dagger(\sigma')$ be the minimal such $\sigma$. In each of the 
above cases, $\xi$ induces a natural transformation
$(\xi^\dagger,\xi^*)$ of diagrams, and therefore a homomorphism
$\xi^*$ of limits.

For example, in the case of $HV$, the homomorphism $\xi^*\colon
HV(\sigma)\to HV(\sigma')$ is given in terms of the coordinate
functions $x$ and $x'$ by the matrix $\xi^{tr}$; it is well-defined
because $w\in\xi^\dagger(\sigma')^\perp$ implies that
$\xi^{tr}w\in(\sigma')^\perp$.  In the case of $KV$, the
homomorphism $\xi^*\colon KV(\sigma)\to KV(\sigma')$ is induced by
$\xi^*(\alpha^J)=(\alpha')^{\xi^{tr}J}$, and is well-defined for similar
reasons.

The construction of each piecewise algebra is therefore functorial
(although care is required to check that $\dagger$ preserves
composition). In particular, isomorphic fans yield isomorphic
algebras.
\end{rem}

Piecewise algebraic structures are natural generalizations of their
global counterparts, and provide simple qualitative descriptions of
algebras that may well be difficult to express in quantitative
terms. For example, see \cite[Section 4]{bafrra:ecr}, where
$P_H(x;\varSigma_{(1,2,3,4)})$ is computed in terms of 
generators and relations. The simplest non-trivial divisive example 
is the following.
\begin{exa}\label{oot}
The fan $\Sigma_{(1,1,2)}$ in $\R^2$ has seven cones: $\{0\}$; the
three rays through $r_0=(-1,-2)$, $r_1=(1,0)$, and $r_2=(0,1)$; and
three $2$-dimensional cones generated by all pairs of rays.

So $P_H(x)$ is an $S_\Z(x)$-algebra, where $x=(x_1,x_2)$. 
Calculations confirm that $P_H(x)$ is generated as a ring by four 
piecewise polynomials, namely the global linear functions $x_1$ and 
$x_2$, together with 
\[
\begin{picture}(295,66)
\put(0,30){$p\;=$}
\put(40,2){
\begin{picture}(40,60)
\put(20,40){\vector(-1,-2){20}}
\put(6,0){\makebox(0,0)[bl]{${\scriptstyle r_0}$}}
\put(20,40){\vector(1,0){20}}
\put(44,36){\makebox(0,0)[tr]{${\scriptstyle r_1}$}}
\put(20,40){\vector(0,1){20}}
\put(16,60){\makebox(0,0)[tr]{${\scriptstyle r_2}$}}
\put(28,48){\makebox(0,0)[bl]{$0$}}
\put(8,32){\makebox(0,0)[br]{$2x_1$}}
\put(20,16){\makebox(0,0)[bl]{$x_2$}}
\end{picture}}
\put(120,30){\rm and}
\put(170,30){$q\;=$}
\put(245,2){
\begin{picture}(40,60)\put(20,40){\vector(-1,-2){20}}
\put(6,0){\makebox(0,0)[bl]{${\scriptstyle r_0}$}}
\put(20,40){\vector(1,0){20}}
\put(44,36){\makebox(0,0)[tr]{${\scriptstyle r_1}$}}
\put(20,40){\vector(0,1){20}}
\put(16,60){\makebox(0,0)[tr]{${\scriptstyle r_2}$}}
\put(28,48){\makebox(0,0)[bl]{$0$}}
\put(16,32){\makebox(0,0)[br]{$x_1(2x_1-x_2)$}}
\put(20,16){\makebox(0,0)[bl]{$0$}}
\end{picture}}
\end{picture}
\]
of degree $2$ and $4$ respectively. In fact $P_H(x)$ is 
isomorphic to $\Z[x_1,x_2,p,q]/I_1$, where $I_1$ is the ideal
\[
\big(\,p(p-x_2)-2q,\;\,q(p-2x_1),\;\,
q(q-x_1(2x_1-x_2))\,\big)\,.
\]
As an $S_\Z(x)$--module, $P_H(x)$ has basis $\{1,p,q\}$.

Similarly, $P_K(\alpha)$ is an $S_\Z^\pm(\alpha)$-algebra, where
$\alpha=(\alpha_1,\alpha_2)$. Calculations confirm that 
$P_K(\alpha)$ is generated as a ring by six piecewise Laurent 
polynomials, namely the global elements $\alpha_1^\pm$ and 
$\alpha_2^\pm$, together with 
\[
\begin{picture}(320,66)
\put(0,30){$\epsilon\;=$}
\put(40,2){
\begin{picture}(40,60)
\put(20,40){\vector(-1,-2){20}}
\put(6,0){\makebox(0,0)[bl]{${\scriptstyle r_0}$}}
\put(20,40){\vector(1,0){20}}
\put(46,36){\makebox(0,0)[tr]{${\scriptstyle r_1}$}}
\put(20,40){\vector(0,1){20}}
\put(16,60){\makebox(0,0)[tr]{${\scriptstyle r_2}$}}
\put(28,48){\makebox(0,0)[bl]{$0$}}
\put(14,32){\makebox(0,0)[br]{$1-\alpha_1^2$}}
\put(16,16){\makebox(0,0)[bl]{$1-\alpha_2$}}
\end{picture}}
\put(125,30){\rm and}
\put(175,30){$\zeta\;=$}
\put(272,2){
\begin{picture}(40,60)\put(20,40){\vector(-1,-2){20}}
\put(6,0){\makebox(0,0)[bl]{${\scriptstyle r_0}$}}
\put(20,40){\vector(1,0){20}}
\put(44,36){\makebox(0,0)[tr]{${\scriptstyle r_1}$}}
\put(20,40){\vector(0,1){20}}
\put(16,60){\makebox(0,0)[tr]{${\scriptstyle r_2}$}}
\put(28,48){\makebox(0,0)[bl]{$0$}}
\put(16,32){\makebox(0,0)[br]{$(1-\alpha_1)(\alpha_2-\alpha_1^2)$}}
\put(20,16){\makebox(0,0)[bl]{$0$}}
\end{picture}}
\end{picture}
\]
of grading (and virtual dimension) $0$. In fact $P_K(\alpha)$ is 
isomorphic to $S_\Z^\pm(\alpha_1,\alpha_2)[\epsilon,\zeta]/I_2$, 
where $I_2$ is the ideal 
\[
\big(\,
\epsilon(\epsilon+\alpha_2-1)-(1+\alpha_1)\zeta,\;\,
\zeta(\epsilon+\alpha_1^2-1),\;\,
\zeta(\zeta-(1-\alpha_1)(\alpha_2-\alpha_1^2))\,\big)\,.
\]
As an $S_\Z^\pm(\alpha)$-module, $P_K(\alpha)$ has basis 
$\{1,\epsilon,\zeta\}$. An equivalent calculation of Anderson and 
Payne \cite[Example 1.6]{anpa:okt} interprets the latter in terms of an 
$R(T^2)$-module basis for the algebra of {\em piecewise 
exponential functions} on $\varSigma_{(1,1,2)}$.
\end{exa}

%
%
%
%
%
%
%
%
%

\section{Cohomological applications}\label{coap}

In this section we prove Theorem \ref{supmain} by translating the 
GKM-theoretic content of Proposition \ref{gkmdivwps} into the 
piecewise algebraic language of Section \ref{pial}.

Our motivation lies in the results of \cite{bafrra:ecr}, which state
that the Borel equivariant cohomology ring $H^*_{T^n}(\xvs;R)$ is
isomorphic to $P_{H\!R}(x;\varSigma)$ for any projective toric variety
(smooth or singular) whose integral cohomology is concentrated in even
dimensions. This may be thought of as a statement of compatibility
with limits, in the sense that equivariant cohomology maps the
homotopy colimit $\hocolim V\simeq X_\varSigma$ to the algebraic limit
$\lim\HR V=P_{H\!R}(x;\varSigma)$. It follows from \cite{bafrra:ecr}
that the sequence
\begin{equation}\label{forget2}
0\lra(S_R(x))\lra P_{H\!R}(x;\varSigma)\lra H^*(\xvs;R)\lra 0
\end{equation}
is short exact for any such variety. Furthermore, $P_{H\!R}(x;\varSigma)$ 
is isomorphic to the \emph{face ring} (or \emph{Stanley-Reisner algebra})
$R[x;\varSigma]$ for any \emph{smooth} fan. 

Working over a field immediately simplifies the situation; for example,  
\begin{equation}\label{ratface}
H^*_{T^n}(X_\varSigma;\Q)\;\cong\;P_{H\Q}(x;\varSigma)\;\cong\;
\Q[x;\varSigma]
\end{equation}
holds for \emph{any} fan $\varSigma$.

Kawasaki's calculations \cite{kaw:ctp} confirm that \eqref{forget2} is
short exact for every weighted projective space.  In other words,
$P_{H\!R}(x;\vsc)$ reduces to the face ring whenever $\chi_j=1$ for
every $0\leq j\leq n$.

We now explain how to interpret the GKM description of Proposition
\ref{gkmdivwps} as the limit of an appropriate contravariant $\vsc$ 
diagram. To proceed, we must therefore identify $\vsc$ more explicitly. 
For general weights $\chi$, this may be difficult; in the divisive case, 
however, it is easy to specify the rays $r_0$, \dots, $r_n$ precisely.
Bearing in mind that $\chi$ is normalised \eqref{norm}, we set
\begin{equation}\label{raysvsc}
\begin{pmatrix}
r_0&\!\!\dots\!\!&r_n
\end{pmatrix}
\;\;=\;\;
\begin{pmatrix}
-1&1&0&\dots&0\\
-\chi_2&0&1&\dots&0\\
\vdots&\vdots&\vdots&\ddots&\vdots\\
-\chi_n&0&0&\dots&1
\end{pmatrix}
\end{equation}
as an $n\times(n+1)$ matrix. The cones $\sigma_A$ of $\vsc$ are
generated by rays $\{r_i:i\notin A\}$, as $A$ ranges over all
non-empty strictly increasing subsequences $a_1,\dots,a_d$ of
$0,\dots,n$. In particular, the $n$--dimensional cones are $\sigma_0$,
\dots, $\sigma_n$, and $\sigma_A\cap\sigma_{A'}=\sigma_{A\cdot A'}$ 
holds for any $A$ and $A'$, where $A\?\cdot\? A'$ is given by 
juxtaposition and reordering.

In order to study the diagrams $KV$, $\MUV$ and $HV$
of Section \ref{pial}, we must first identify the linear forms of 
\eqref{linforms} for $\vsc$.

For every $0\leq k<l\leq n$, the $(n-1)$--dimensional cone
$\sigma_{k,l}$ is generated by the columns of the
$n\times(n-1)$-matrix obtained from \eqref{raysvsc} by deleting
columns $k$ and $l$. So a basis for $\R_{\sigma^\perp_{k,l}}$
consists of a single primitive integral vector $w$, orthogonal to all
remaining columns. If $1\leq k$, then
\[
w\;=\;(0,\dots,0,-\chi_l/\chi_k,0,\dots,0,1,0,\dots,0)
\]
(non-zero in positions $k$ and $l$) satisfies the conditions; if 
$k=0$, then $w=(0,\dots,0,1,0\dots,0)$ suffices.

With reference to Examples \ref{defplp}, \ref{defbf}, and \ref{defpp}, 
we may now deduce the following.
\begin{lem}\label{idealsvsc}
For any cone $\sigma_{k,l}$ and any $0\leq k<l\leq n$ in $\vsc$, the
principal ideals $J_{k,l}\mathbin{\emph{$\letbe$}} J_{\sigma_{k,l}}$
in $S_\Z^\pm(\alpha)$, $\MU_{T^n}^*$, and $S_\Z(x)$ are generated by
\smash{$1-\alpha_l\alpha_k^{-\chi_l/\chi_k}$,
  $e\big(\alpha_l\alpha_k^{-\chi_l/\chi_k}\big)$, and
  $x_l-(\chi_l/\chi_k)x_k$} respectively, where $x_0=0$ and
$\alpha_0=1$. \qed
\end{lem}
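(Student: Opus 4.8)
The plan is to exploit the fact that $\sigma_{k,l}$ has dimension $n-1$, and hence codimension $1$. In the notation of \eqref{linforms} this means $n-d=1$, so $\R_{\sigma_{k,l}^\perp}$ is spanned by a \emph{single} primitive integral vector $w=w_1$, determined up to sign. Consequently, by Examples \ref{defplp}, \ref{defbf}, and \ref{defpp}, the ideal $J_{k,l}$ is \emph{principal} in each of the three rings: it is generated by the one Euler class $1-\alpha^{w}$ in $S_\Z^\pm(\alpha)$, by $e(\alpha^{w})$ in $\MU_{T^n}^*$, and by $w^{tr}x$ in $S_\Z(x)$. (Changing the sign of $w$ replaces each generator by an associate, so the principal ideal is unchanged.) The entire statement therefore reduces to substituting the explicit $w$ recorded just before the lemma into these three expressions.

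Before substituting, I would record the one arithmetic input: since $\chi$ is divisive (Assumption \ref{assdivchicanact}.1) we have $\chi_0\mid\chi_1\mid\cdots\mid\chi_n$, so $\chi_k\mid\chi_l$ whenever $k\leq l$ and the ratio $\chi_l/\chi_k$ is a positive integer. This is exactly what guarantees that $w$ --- with $-\chi_l/\chi_k$ in position $k$ and $1$ in position $l$ (all other entries zero) for $1\leq k$, and with a single $1$ in position $l$ for $k=0$ --- is a genuine primitive vector in $\Z^n$, and that the three generators lie in the respective integral rings.

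The substitution is then immediate. For $1\leq k<l\leq n$ the monomial is $\alpha^{w}=\alpha_l\alpha_k^{-\chi_l/\chi_k}$, giving the $K$-theory generator $1-\alpha_l\alpha_k^{-\chi_l/\chi_k}$ and the cobordism generator $e\big(\alpha_l\alpha_k^{-\chi_l/\chi_k}\big)$, while $w^{tr}x=x_l-(\chi_l/\chi_k)x_k$. For the boundary case $k=0$ the conventions $\chi_0=1$, $\alpha_0=1$, and $x_0=0$ cause these same formulas to specialise correctly: $\alpha_0^{-\chi_l/\chi_0}=1$ and $(\chi_l/\chi_0)x_0=0$, so one recovers $1-\alpha_l$, $e(\alpha_l)$, and $x_l$. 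As a consistency check, all three agree with the Euler-class computation \eqref{etdij} after the reindexing $l=n-j$, $k=n-i$ that converts the skeletal labels used there into the ray labels $(k,l)$ used here.

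I do not expect any genuine obstacle. The only conceptual step is the codimension-one observation that collapses each $J_{k,l}$ to a principal ideal; everything after that is a bookkeeping substitution. The two points meriting a moment's care are the integrality of the exponent $\chi_l/\chi_k$, supplied by divisiveness, and the uniform handling of the $k=0$ case --- which is precisely why the statement records the conventions $\alpha_0=1$ and $x_0=0$. This accounts for the brevity with which the result is stated, the routine verification being left to the reader.
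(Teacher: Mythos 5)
Your proposal is correct and takes essentially the same route as the paper, whose own justification is precisely the discussion preceding the lemma: observe that each $\sigma_{k,l}$ has codimension one, exhibit the single primitive normal vector $w$ (with $-\chi_l/\chi_k$ in position $k$ and $1$ in position $l$, or $w=e_l$ when $k=0$), and substitute it into the generators $1-\alpha^{w}$, $e(\alpha^{w})$, and $w^{tr}x$ of Examples \ref{defplp}, \ref{defbf}, and \ref{defpp}. Your explicit remarks that divisiveness supplies the integrality of $\chi_l/\chi_k$ and that the conventions $\alpha_0=1$, $x_0=0$ handle the $k=0$ case are exactly the points the paper leaves implicit, and your consistency check against \eqref{etdij} is accurate.
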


Lemma \ref{idealsvsc} summarizes the input required to prove Theorem
\ref{supmain}. 

\begin{thm}\label{supmain}
For any divisive weighted projective space $\Pc$:
\begin{enumerate}
\item[{\bf 1.}]
$K_{T^n}^*(\Pc)$ is isomorphic as an $S^\pm_{K^*}(\alpha)$-algebra to
$P_K(\alpha;\vsc)$;
\item[{\bf 2.}]
$\MU_{T^n}^*(\Pc)$ is isomorphic as an $MU_{T^n}^*$-algebra to
$P_{M\1U}(\vsc)$;
\item[{\bf 3.}]
$H_{T^n}^*(\Pc;R)$ is isomorphic as an $S_R(x)$-algebra to 
$P_{H\2R}(x;\vsc)$.
\end{enumerate}
\end{thm}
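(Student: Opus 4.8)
The plan is to realise both $\varGamma_{\bP(\chi)}$ of Proposition \ref{gkmdivwps} and the limit $P_E(\vsc)=\lim\EV$ as $E^*_{T^n}$-subalgebras of a product of copies of $E^*_{T^n}$, and to check that their defining conditions coincide cone by cone. The GKM description indexes its factors by the $n+1$ fixed points, equivalently by the cells $e^{2i}$ for $0\le i\le n$; since each $F_i$ is a single point, $E^*_{T^n}(F_i)=E^*_{T^n}$. On the piecewise side, the top-dimensional cones $\sigma_0,\dots,\sigma_n$ of \eqref{raysvsc} satisfy $T_{\sigma_k}=T^n$, so $\EV(\sigma_k)=E^*_{T^n}$, and these furnish $n+1$ distinguished factors of $\prod_\sigma\EV(\sigma)$. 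First I would set up the order-reversing bijection $k=n-i$ between cell indices and maximal-cone indices, sending a GKM class $(y_i)$ to the tuple whose component on $\sigma_k$ is $y_{n-k}$.

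The technical heart is to show that $P_E(\vsc)$ is controlled entirely by these top components together with the compatibility conditions across codimension-one walls. For a face $\sigma\subseteq\tau$ the ideal inclusion $J_\tau\subseteq J_\sigma$ makes $q_{\tau,\sigma}$ the evident quotient map, so every component $f^\sigma$ of a piecewise coefficient is forced to be the image of $f^{\sigma_a}$ for any maximal $\sigma_a\supseteq\sigma$; the only genuine constraint is that the images of the various $f^{\sigma_a}$ in $\EV(\sigma)$ agree. The key point is that $\sigma_A\subseteq\sigma_{\{a,a'\}}$ whenever $a,a'\in A$, whence $J_{\{a,a'\}}\subseteq J_{\sigma_A}$; so the single wall condition $f^{\sigma_a}\equiv f^{\sigma_{a'}}\pmod{J_{\{a,a'\}}}$ already forces agreement modulo $J_{\sigma_A}$. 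Consequently $P_E(\vsc)$ is isomorphic to the algebra of tuples $(f^{\sigma_0},\dots,f^{\sigma_n})$ in $\prod_k E^*_{T^n}$ subject only to the $\binom{n+1}{2}$ wall conditions, one for each codimension-one cone $\sigma_{k,l}$ with $k<l$.

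It then remains to match these wall conditions with the GKM divisibility conditions. By Lemma \ref{idealsvsc} the wall ideal $J_{k,l}$ is principal, generated by $1-\alpha_l\alpha_k^{-\chi_l/\chi_k}$, by $e(\alpha_l\alpha_k^{-\chi_l/\chi_k})$, or by $x_l-(\chi_l/\chi_k)x_k$ according as $E=K$, $\MU$, $H$. Under the substitution $k=n-i$, $l=n-j$, comparison with \eqref{dijalphas} and \eqref{etdij} shows that this generator is exactly $e_{T^n}(\rho_{i,j})$, so the wall condition $f^{\sigma_k}\equiv f^{\sigma_l}\pmod{J_{k,l}}$ is precisely the requirement that $e_{T^n}(\rho_{i,j})$ divide $y_i-y_j$. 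Since addition and multiplication are componentwise on both sides and the bijection fixes each distinguished factor (in particular it carries global coefficients to global coefficients), the correspondence is an isomorphism of $E^*_{T^n}$-algebras, and Proposition \ref{gkmdivwps} identifies the source with $E^*_{T^n}(\Pc)$. Specialising $E$ to $K$, to $\MU$, and to $H$ with coefficients $R$ yields the three assertions. I expect the one nontrivial step to be the reduction of the limit to codimension-one data in the second paragraph, which rests on the face relation $\sigma_A\subseteq\sigma_{\{a,a'\}}$ special to the fan \eqref{raysvsc}; everything else is uniform across the three theories.
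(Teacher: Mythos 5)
Your proposal is correct and takes essentially the same approach as the paper: both identify the fixed points with the maximal cones via $k=n-i$, match the wall ideals $J_{k,l}$ of Lemma \ref{idealsvsc} with the Euler classes of \eqref{etdij}, and rest on the fact that lower-dimensional cones impose no conditions beyond the codimension-one ones. Your reduction of the limit to top components subject to wall conditions is exactly what the paper establishes through the well-definedness of its homomorphisms $h_{a_0,\dots,a_d}$ (using that the pairwise divisibility conditions generate the constraints modulo $J_{a_0,\dots,a_d}$) and the verification that $h$ is monic and epic by restricting to $n$-dimensional cones.
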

\begin{proof}
We give the details for {\bf 1}.

Invoking \eqref{etdij} and Proposition \ref{gkmdivwps}, we must
identify the algebra $\varGamma_{\,\bP(\chi)}$ with
$\lim_{\scat{gcalg}}KV$. The former is given by
\[
\left\{g=g(\alpha):
\text{$1-\alpha_{n-j}\alpha_{n-i}^{-\chi_{n-j}/\chi_{n-i}}$ 
divides $g_i-g_j$ for all $0\leq j<i\leq n$}\right\}\;\leq\;
\prod_iS^\pm_{K^*}(\alpha),
\]
and the universal properties of the latter suggest that we proceed by 
finding compatible homomorphisms 
$h_{a_0,\dots,a_d}\colon\varGamma_{\,\bP(\chi)}\ra KV(\sigma_{a_0,\dots,a_d})$
for every cone $\sigma_{a_0,\dots,a_d}$ in $\vsc$. It follows from Example 
\ref{defplp} and Lemma \ref{idealsvsc} that
\[
KV(\sigma_{a_0,\dots,a_d})\;=\;
S^\pm_{K^*}(\alpha)/J_{a_0,\dots,a_d},
\]
where $J_{a_0,\dots,a_d}$ denotes the ideal generated by the Laurent
polynomials $1-\alpha_l\alpha_k^{-\chi_l/\chi_k}$ as $k,l$ 
ranges over the length $2$ subsequences of $0\leq a_0,\dots,a_n\leq n$.

Given any $g$ in $\varGamma_{\,\bP(\chi)}$, we first consider cones of
dimension $n$, and define $h_k(g)\letbe g_{n-k}$ in 
$S^\pm_{K^*}(\alpha)$ for every $0\leq k\leq n$. On cones of dimension 
$n-1$, we let
\[
h_{k,l}(g)\;\letbe\; g_{n-k}\;\equiv\;
g_{n-l}\mod\; 1-\alpha_l\alpha_k^{-\chi_l/\chi_k}
\]
in $KV(\sigma_{k,l})=
S^\pm_{K^*}(\alpha)/(1-\alpha_l\alpha_k^{-\chi_l/\chi_k})$, for 
every $0\leq k<l\leq n$. This is well-defined, because 
$1-\alpha_l\alpha_k^{-\chi_l/\chi_k}$ divides 
$g_{n-k}-g_{n-l}$ in $S^\pm_{K^*}(\alpha)$. The definition extends to 
\[
h_{a_0,\dots,a_d}(g_i)\;\letbe\;g_{n-a_0}\;\equiv\;\cdots\;\equiv
\;g_{n-a_d}\mod\;J_{a_0,\dots,a_d}
\]
for any $2\leq d\leq n$, because the $g_{n-a_0}$, \dots, $g_{n-a_d}$ 
satisfy precisely the required pairwise divisibility conditions in 
$S^\pm_{K^*}(\alpha)$. Moreover, $h_{a_0,\dots,a_d}$ is a homomorphism of 
$S^\pm_{K^*}(\alpha)$-algebras, by definition.

In order to confirm the compatibility of the $h_A$ over
$\cat{cat}^{op}(\vsc)$, we note that every morphism takes the form
$\sigma_A\supseteq\sigma_{A\cdot A'}$. The corresponding projection
$\smash{q_{\sigma_A,\sigma_{A\cdot A'}}}\colon KV(\sigma_A)\ra KV(\sigma_{A\cdot A'})$
is induced by the inclusion $J_A\leq J_{A\cdot A'}$, which adjoins
the expressions 
$1-\alpha_l\alpha_k^{{}}{\mskip-7mu}^{-\chi_l/\chi_k}$ as $k,l$
ranges over the length $2$ subsequences of $A'$: so compatibility is 
assured. We have therefore constructed a homomorphism 
$h\colon\varGamma_{\,\bP(\chi)}\to\lim_{\scat{gcalg}}KV$ of 
$S^\pm_{K^*}(\alpha)$-algebras. 

We conclude by showing that $h$ is automatically an isomorphism. 
Given distinct elements $g$ and $g'$ of $\varGamma_{\,\bP(\chi)}$, 
there  must exist at least one $k$ such that $g_k\neq g'_k$ as 
elements of $S^\pm_{K^*}(\alpha)$; hence $h_k(g)\neq h_k(g')$ in 
$KV(\sigma_k)$, and $h$ is monic. Similarly, any element 
$(g_A)$ of $\lim_{\scat{gcalg}}KV$ determines $(g_a)$ in 
$\varGamma_{\,\bP(\chi)}$, by restricting to $n$-dimensional cones;
thus $h(g_a)=(g_A)$, and $h$ is epic.

The entire argument applies to {\bf 2} and {\bf 3}, with
minor modifications. For $H_{T^n}^*(\Pc;R)$, the statement is also 
a special case of \cite[Proposition 2.2]{bafrra:ecr}.
\end{proof}

Informally, the connection between $T^n$-equivariant bundles over 
$\xvs$ and piecewise Laurent polynomials on $\varSigma$ is easy to 
make.  Every such bundle determines a representation of $T^n$ on 
the fibre at each fixed point, and therefore on each maximal cone. 
These representations must be compatible over any 
$T^{n-1}$-invariant $S^2$ containing two fixed points, and therefore 
on cones of codimension $1$.

The close relationship between GKM theory and piecewise algebra has
long been known over fields of characteristic zero, and Theorem
\ref{supmain} is an instance of its extension to integral situations. 

%
%
%
%
%
%
%
%
%
\section{Completion and Borel cohomology}\label{coboco}

In this section we introduce the completions of $K^*_{T^n}(\Pc)$ and
$MU^*_{T^n}(\Pc)$ at their augmentation ideals, and discuss their
relationships with the Borel equivariant cohomology ring of $\Pc$ 
under the Chern character and the Boardman homomorphism respectively.
We express our results in terms of certain natural transformations 
between diagrams of Section \ref{pial}.

\begin{defns}\label{comptdef}
The \emph{$K$-theory completion transformation} 
$\wedge\letbe\wedge^K\colon KV\ra\KBV$ is defined on objects by the 
ring homomorphisms
\[
S_{K^*}^\pm(\alpha)/J_\sigma\;\lra\; K^*[[\gamma]]/J_\sigma,
\]
where $\wedge(\alpha_j)=1-\gamma_j$ and
$\wedge(\alpha_j^{-1})=1+\sum_{i\geq 1}\gamma^i_j$ for 
$1\leq j\leq n$; on morphisms, $\wedge$ maps the first natural 
projection to the second. 

The \emph{cobordism completion transformation} 
$\wedge\letbe\wedge^{MU}\colon\MUV\ra\MUBV$ is defined on objects 
by the ring homomorphisms 
\[
MU^*_{T^n}/J_\sigma\;\lra\; L^*[[u]]/J_\sigma,
\]
where $\wedge(e_{T^n}(\alpha_j))=u_j$ and 
$\wedge(e_{T^n}(\alpha_j^{-1}))=[-1](u_j)$ for 
$1\leq j\leq n$; on morphisms, $\wedge$ maps the first natural 
projection to the second. 
\end{defns}

So $\wedge^K$ is induced by the homomorphism 
\begin{equation}\label{compaugk}
S_{K^*}^\pm(\alpha)\;\lra\; K^*[[\gamma]]
\end{equation}
representing completion at the augmentation ideal $I$
\cite[Chapter 10]{atma:ica}. It is well-defined because 
$\wedge(\alpha_j^{-1})=(\wedge(\alpha_j))^{-1}$ for $1\leq j\leq
n$ and $\wedge(\alpha^{w_c})=(1-\gamma)^{w_c}$ for $1\leq
c\leq n-d$, so that $\wedge$ maps $J_\sigma$ to $J_\sigma$. 
The augmentation \smash{$S_{K^*}^\pm(\alpha)\ra K^*$} 
assigns to each virtual representation its dimension.

Similarly, $\wedge^{MU}$ is induced by the homomorphism 
\begin{equation}\label{compaugmu}
MU^*_{T^n}\;\lra\; L^*[[u]]
\end{equation}
representing completion at the augmentation ideal $I$. It is 
well-defined because 
$\wedge(\alpha^{w_c})=[w_{c,1}](u_1)+_U\dots+_U[w_{c,n}](u_n)$
for $1\leq c\leq n-d$, so that $\wedge$ maps $J_\sigma$ to 
$J_\sigma$. The augmentation \smash{$MU^*_{T^n}\ra L^*$} 
forgets the $T^n$-action on each equivariant cobordism class.

\begin{defns}\label{chartdef}
The \emph{Chern transformation} $ct\colon\KBV\ra H(K\Q^*)V$ 
is given on objects by the homomorphisms
\[
K^*[[\gamma]]/J_\sigma\;\lra\; S_{K\Q^*}(x)/J_\sigma,
\]
where $ct(\gamma_j)=1-e^{zx_j}$ for $1\leq j\leq n$, and  
$ct$ embeds the scalars $K^*$ as $K^*\otimes 1$ in 
$K\Q^*\letbe K^*\otimes\Q$; on morphisms, $ct$ maps the first 
natural projection to the second. 

The \emph{Boardman transformation} 
$bt\colon\MUBV\ra H(\HwMU^*)V$ is given on objects by 
the homomorphisms
\[
L^*[[u]]/J_\sigma\;\lra\; S_{H\wedge MU^*}(x)/J_\sigma,
\]
where $bt(u_j)=\sum_{i\geq 0}b_ix_j^{i+1}$ for $1\leq j\leq n$, 
and $bt$ embeds the scalars $L^*$ in $\HwMU^*$ via the Hurewicz
homomorphism; on morphisms, $bt$ maps the first natural 
projection to the second. 
\end{defns}

So $ct$ and $bt$ are induced by the respective homomorphisms
\begin{equation}\label{charag}
K^*[[\gamma]]\;\lra\; S_{K\Q^*}(x)\sands 
L^*[[u]]\;\lra\; S_{H\wedge MU^*}(x),
\end{equation}
and are well-defined because $J_\sigma$ maps to $J_\sigma$ in each 
instance.

The commutativity of the diagrams required for the naturality of 
$\wedge$, $ct$, and $bt$ follow directly from the definitions.
They therefore induce morphisms of limits, and so define 
homomorphisms
\begin{equation}\label{cplcthomo}
\wedge^K\colon P_K(\alpha;\varSigma)\;\lra\; 
P_{K\?_B}(\gamma;\varSigma)
\sands
ct\colon P_{K_B}(\gamma;\varSigma)\;\lra\; 
P_{H(K\Q^*)}(x;\varSigma)
\end{equation}
and 
\begin{equation}\label{cplbthomo}
\wedge^{MU}\colon P_{M\1U}(\varSigma)\;\lra\; 
P_{M\1U_B}(u;\varSigma)
\sands
bt\colon P_{M\1U_B}(u;\varSigma)\;\lra\; 
P_{H(H\wedge M\1U^*)}(x;\varSigma)
\end{equation}
of piecewise structures. In particular, $\wedge^K$ and $\wedge^{MU}$
may be viewed as \emph{conewise completions}; but completion commutes 
with limits, so they coincide with the respective completions of
$P_K(\alpha;\varSigma)$ and $P_{M\1U}(\varSigma)$ at their
augmentation ideals $I$. Similarly, $ct$ and $bt$ are the 
\emph{conewise Chern} and \emph{conewise Boardman homomorphism} 
respectively.

Note that $\wedge^K$ and $\wedge^{M\1U}$ are morphisms of algebras 
over the respective completion homomorphisms \eqref{compaugk} and 
\eqref{compaugmu} of scalars. Furthermore, $ct$ and $bt$ arise from 
conewise rational isomorphisms, and are therefore rational 
isomorphisms themselves; they are also morphisms of algebras over 
the homomorphisms \eqref{charag}. In other words, \eqref{cplcthomo} 
and \eqref{cplbthomo} describe the extensions of \eqref{compaugk}, 
\eqref{compaugmu}, and \eqref{charag} to the piecewise setting.

\begin{rems}\label{comp}
The composition $ct\circ\wedge^K$ is a natural transformation
$cc\colon KV\ra H(K\Q^*)V$. It is induced by the 
homomorphism
\begin{equation}\label{ccagk}
cc\colon S_{K^*}^\pm(\alpha)\;\lra\; S_{K\Q^*}(x),
\end{equation}
which satisfies $cc(\alpha_j)=e^{zx_j}$ for all $1\leq j\leq n$. 
On limits, 
\[
cc\colon P_K(\alpha;\varSigma)\;\lra\; P_{H(K\Q^*)}(x;\varSigma)
\]
identifies $P_K(\alpha;\varSigma)$ with a subring of piecewise 
formal exponential functions (the viewpoint adopted by 
\cite{anpa:okt}, and anticipated in Example \ref{oot}). It is 
a morphism of algebras over the homomorphism \eqref{ccagk} of 
scalars.

Similarly, $bt\circ\wedge^{M\1U}$ is a natural transformation
$bc\colon\MUV\ra H(\HwMU^*)V$. It is induced by the homomorphism
\begin{equation}\label{ccagmu}
bc\colon MU^*_{T^n}\;\lra\; S_{H\wedge MU^*}(x),
\end{equation}
which satisfies $bc(e(\alpha_j))=\sum_{i\geq 0}b_ix_j^{i+1}$ for all 
$1\leq j\leq n$. On limits, 
\[
bc\colon P_{M\1U}(\varSigma)\;\lra\; 
P_{H(H\wedge MU^*)}(x;\varSigma)
\]
identifies $P_{M\1U}(\varSigma)$ with a subring of piecewise 
formal power series; it is a morphism of algebras over the 
homomorphism \eqref{ccagmu} of scalars.
\end{rems}

Using the isomorphism of Theorem \ref{supmain}.2, we may now 
interpret the homomorphisms \eqref{cplcthomo} and \eqref{cplbthomo}  
topologically. We deal first with $K$-theory.

For any $T^n\acts Y$, the Borel equivariant $K$-theory 
$K^*(ET^n\times_{T^n}Y)$ is an algebra over the coefficient ring 
$K^*(BT^n)$, which acts via the projection map $\pi$ of 
the $T^n$-bundle
\begin{equation}\label{borelproj}
Y\;\lra\; ET^n\times_{T^n}Y\;\stackrel{\pi}{\lra}\; BT^n.
\end{equation}
Atiyah and Segal define a homomorphism $\lambda\colon
K_{T^n}^*(Y)\to K^*(ET^n\times_{T^n}Y)$ by assigning the vector
bundle $(ET^n\times\theta)/T^n$ to each $T^n$-equivariant vector
bundle $\theta$ over $Y$. For compact spaces such as $\xvs$, they
prove \cite[Theorem 2.1]{AtiSeg69} that $\lambda$ is completion at 
the augmentation ideal $I$. If, for example, $Y$ is the $1$-point
$T^n$-space $*$, then $\lambda\colon K_{T^n}^*\to K^*(BT^n)$
corresponds to the completion map \eqref{compaugk}, and identifies 
the coefficients $K^*(BT^n)$ with $K^*[[\gamma]]$, as in Example
\ref{defbktd}. Furthermore, $\gamma_j$ is the $K$-theoretic Euler 
class of the $j$th canonical line bundle over $BT^n$, for 
$1\leq j\leq n$. In general,  $\lambda$ may be interpreted as 
converting the $S_{K^*}^\pm(\alpha)$-algebra structure of 
$K_{T^n}^*(Y)$ to the $K^*[[\gamma]]$-algebra structure of 
$K^*(ET^n\times_{T^n}Y)$. 

The \emph{Chern character} $ch\colon K^*(ET^n\times_{T^n}Y)\to
H^*(ET^n\times_{T^n}Y;K\Q^*)\belet H_{T^n}^*(Y;K\Q^*)$ 
is the natural transformation of cohomology theories induced by the 
Hurewicz morphism
\begin{equation}\label{defch}
K\;\simeq\; S^0\!\wedge\? K\;\stackrel{i\wedge 1}{\lllra\;}\HwK
\end{equation}
of complex-oriented ring spectra, where $i$ denotes the unit of the
integral Eilenberg-Mac Lane spectrum $H$. On coefficient 
rings, it embeds $K^*\cong\Z[z,z^{-1}]$ in 
$\HwK^*\cong\Q[z,z^{-1}]=K\Q^*$ by $\Z<\Q$, and on 
$\C P^\infty$ it embeds $K^*(\C P^\infty)\cong K^*[[\gamma_1]]$ in 
$H^*(\C P^\infty;K\Q^*)\cong S_{K\Q^*}(x_1)$ by 
$ch(\gamma_1)=1-e^{zx_1}$. Further properties of $ch$ may be 
found in \cite[Chapter 5]{hil:GCT}, for example; in particular, it is 
always a rational isomorphism.

\begin{thm}\label{borkdwps}
For any divisive weighted projective space, $K^*(ET^n\times_{T^n}\Pc)$
is isomorphic as a $K^*[[\gamma]]$-algebra to $P_{K\?_B}(\gamma;\vsc)$;
with respect to this identification, the Atiyah-Segal completion map
$\lambda\colon~K_{T^n}^*(\Pc)\to K^*(ET^n\times_{T^n}\Pc)$ corresponds
to the conewise completion homomorphism~$\wedge^K$, and the Chern
character $ch\colon K^*(ET^n\times_{T^n}\Pc)\to H_{T^n}^*(\Pc;K\Q^*)$ 
corresponds to the conewise Chern transformation 
$ct$.
\end{thm}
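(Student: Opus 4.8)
The plan is to derive the theorem by completing the isomorphism of Theorem \ref{supmain}.1 at the augmentation ideal $I$, using the Atiyah-Segal completion theorem to interpret the completion topologically and the fact, recorded just before Remarks \ref{comp}, that $I$-adic completion commutes with the limits that define our piecewise algebras.

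First I would observe that $\Pc$ is compact, so \cite[Theorem 2.1]{AtiSeg69} identifies the Atiyah-Segal map $\lambda\colon K_{T^n}^*(\Pc)\to K^*(ET^n\times_{T^n}\Pc)$ with completion at $I$; hence $K^*(ET^n\times_{T^n}\Pc)\cong K_{T^n}^*(\Pc)^{\wedge}_I$ as algebras over $K^*(BT^n)\cong K^*[[\gamma]]$. Applying Theorem \ref{supmain}.1, which furnishes an isomorphism $K_{T^n}^*(\Pc)\cong P_K(\alpha;\vsc)$ of $S_{K^*}^\pm(\alpha)$-algebras compatible with the augmentation to $K^*$, and completing both sides, I obtain
\[
K^*(ET^n\times_{T^n}\Pc)\;\cong\;P_K(\alpha;\vsc)^{\wedge}_I\;\cong\;P_{K\?_B}(\gamma;\vsc),
\]
where the second isomorphism is precisely the conewise completion $\wedge^K$. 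Indeed, conewise the completion of $S_{K^*}^\pm(\alpha)/J_\sigma$ at $I$ is $K^*[[\gamma]]/J_\sigma=\KBV(\sigma)$ (the ideal $J_\sigma$ mapping correctly by the verification following Definition \ref{comptdef}), and completion commutes with the finite limit over the cones of $\vsc$. By uniqueness of completion this chain carries $\lambda$ to $\wedge^K$, settling the first two assertions.

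For the Chern character I would argue by naturality. Applying the natural transformation $ch$ of cohomology theories to the orbit diagram $V$ of \eqref{defdiagv}, whose value at $\sigma$ satisfies $ET^n\times_{T^n}(T^n/T_\sigma)\simeq BT_\sigma$, yields on each cone the standard Chern character $K^*(BT_\sigma)\to H^*(BT_\sigma;K\Q^*)$, sending $\gamma_j\mapsto 1-e^{zx_j}$ by \eqref{defch}; this is exactly the conewise definition of $ct$ in Definition \ref{chartdef}. Since $ch$ commutes with restriction to the fixed points, equivalently with the monic projections onto the top-dimensional cones under which both $P_{K\?_B}(\gamma;\vsc)$ and $H_{T^n}^*(\Pc;K\Q^*)\cong P_{H(K\Q^*)}(x;\vsc)$ (Theorem \ref{supmain}.3 with $R=K\Q^*$) inject, it follows on passing to limits that the geometric $ch$ corresponds to $ct$.

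The step requiring the most care is the interchange of $I$-adic completion with the limit defining $P_K(\alpha;\vsc)$, together with the conewise identity $(S_{K^*}^\pm(\alpha)/J_\sigma)^{\wedge}_I\cong K^*[[\gamma]]/J_\sigma$. Both are legitimate because $P_K(\alpha;\vsc)$ is a finitely generated module over the Noetherian ring $S_{K^*}^\pm(\alpha)$ (indeed free of rank $n+1$, by Proposition \ref{CWstruc}), so completion is exact, preserves the extension of the $J_\sigma$, and commutes with the finite limit over the poset of cones. Granting this, the Chern character claim reduces, as above, to the elementary agreement of the standard Chern character on $K^*(BT^n)$ with the formula defining $ct$, so the only genuine work lies in the algebraic completion argument rather than in any new topological input.
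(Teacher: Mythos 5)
Your proposal is correct and follows essentially the same route as the paper: the first part combines the Atiyah--Segal completion theorem with Theorem \ref{supmain}.1 and the fact that $I$-adic completion commutes with the finite limit defining the piecewise algebra (which the paper asserts just before Remarks \ref{comp} and you justify via Noetherian/finite-generation arguments), and the Chern character claim follows from naturality of $ch$ with respect to the fixed-point inclusion together with its conewise agreement with $ct$. The only quibble is attributing freeness of rank $n+1$ to Proposition \ref{CWstruc} (which only gives the cell structure); but finite generation, which is all your completion argument needs, is immediate anyway since the limit is a submodule of a finite product of cyclic modules over a Noetherian ring.
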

\begin{proof}
Theorem \ref{supmain}.1 shows that $\lambda$ corresponds to $\wedge$,
and has target $P_{K\?_B}(\gamma;\vsc)$; so the latter is necessarily 
isomorphic to $K^*(ET^n\times_{T^n}\Pc)$ as a $K^*[[\gamma]]$-algebra.

Moreover, $ch\colon K^*(BT^n)\to H^*(BT^n;K\Q^*)$ maps $\gamma_j$ to
$1-e^{zx_j}$ for every $1\leq j\leq n$, which agrees precisely with
$ct$ as specified in \eqref{charag}. Since $ch$ is natural with
respect to the inclusion $\coprod BT^n\subset ET^n\times_{T^n}\Pc$
induced by the fixed point set, the result follows.
\end{proof}

We may extend the isomorphism of Theorem \ref{borkdwps} to any
projective toric variety $\xvs$ for which $H^*(\xvs)$ is torsion
free and concentrated in even dimensions. We interpret 
$P_{K\?_B}(\gamma;\varSigma)$ as a $K^*[[\gamma]]$-subalgebra 
of $\prod K^*[[\gamma]]$, where the latter contains one factor for each 
maximal cone of $\varSigma$, and hence for each $T^n$-fixed point.
\begin{thm}\label{borkxsig}
The Borel equivariant $K$-theory of any such $\xvs$ is isomorphic 
as a $K^*[[\gamma]]$-algebra to $P_{K\?_B}(\gamma;\varSigma)$.
\end{thm}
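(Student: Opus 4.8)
The plan is to realise both $K^*(ET^n\times_{T^n}\xvs)$ and its proposed model $P_{K\?_B}(\gamma;\varSigma)$ as subalgebras of $\prod_p K^*[[\gamma]]$, the product ranging over the $T^n$-fixed points $p$ (equivalently, over the maximal cones of $\varSigma$), and then to prove that the two subalgebras coincide. Restriction of a Borel class to the fixed point set yields a $K^*[[\gamma]]$-algebra homomorphism $\iota^*\colon K^*(ET^n\times_{T^n}\xvs)\to\prod_p K^*(BT^n)=\prod_p K^*[[\gamma]]$, while by Example \ref{defbktd} the algebra $P_{K\?_B}(\gamma;\varSigma)$ is cut out of the same product by the congruence relations of the diagram $\KBV$. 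This is the precise $K$-theoretic analogue of the cohomological situation of \cite{bafrra:ecr}, where the sequence \eqref{forget2} identifies $H^*_{T^n}(\xvs;R)$ with $P_{H\!R}(x;\varSigma)$; our task is to promote that identification from $H$ to $K$.

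First I would establish equivariant formality. Since $H^*(\xvs)$ is torsion free and concentrated in even degrees, a parity count shows that every differential in the Serre spectral sequence of the fibration $\xvs\to ET^n\times_{T^n}\xvs\to BT^n$ vanishes; as $H^*(BT^n)$ is likewise even and torsion free, so is $H^*(ET^n\times_{T^n}\xvs)$ in each degree. The \ahss\ for $K^*(ET^n\times_{T^n}\xvs)$ therefore collapses, exhibiting the Borel $K$-theory as a free $K^*[[\gamma]]$-module concentrated in even degree, with $K^1(ET^n\times_{T^n}\xvs)=0$ (the usual inverse-limit caveat supplying the power series completion). This torsion freeness is exactly what makes $\iota^*$ injective: after the Chern character $\iota^*$ becomes the cohomological restriction to the fixed points, which is injective by equivariant formality, and the absence of torsion upgrades this to integral injectivity.

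Next I would identify the image of $\iota^*$. Each $1$-dimensional orbit closure is a $T^n$-invariant $2$-sphere joining two fixed points $p$ and $p'$, and corresponds to a codimension-$1$ cone (wall) $\tau$ separating their maximal cones; the $K$-theoretic Euler class of the normal representation along this sphere is precisely $(1+\gamma)^w-1$, where $w$ primitively generates $\R_{\tau^\perp}\cap\Z^n$. A tuple $(f_p)$ lies in the image of $\iota^*$ exactly when $f_p$ and $f_{p'}$ are congruent modulo this Euler class for every wall, and these are verbatim the relations defining $\KBV$ in Example \ref{defbktd}. Hence the image of $\iota^*$ is $P_{K\?_B}(\gamma;\varSigma)$; compatibility with the $K^*[[\gamma]]$-algebra structure is automatic from naturality, and specialising to divisive $\Pc$ recovers Theorem \ref{borkdwps}.

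The hard part is the image characterisation in the singular case. When $\xvs$ is singular its orbit closures may themselves be singular, so their normal data need not assemble into honest vector bundles; the explicit stratified argument of Theorem \ref{th:GKM}, resting on the Assumptions \ref{asss} of \cite{HHH}, is then unavailable, and the wall congruences must be extracted from localisation rather than from a Thom-space filtration. The most economical way I see to make this rigorous is to pass through the injective conewise Chern transformation $ct\colon P_{K\?_B}(\gamma;\varSigma)\hookrightarrow P_{H(K\Q^*)}(x;\varSigma)$: naturality of $ch$ over the inclusion of the fixed point set reduces the $K$-theoretic image problem to the cohomological one already solved in \cite{bafrra:ecr} (compare \eqref{ratface}), while the freeness established above ensures that no information is lost on rationalising. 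Checking that the integral lattice so obtained is exactly $P_{K\?_B}(\gamma;\varSigma)$, and not merely a finite-index overring of it, is the delicate point that the even, torsion-free hypothesis is there to control.
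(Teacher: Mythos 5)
Your setup coincides with the paper's: both restrict to the fixed point set, both use the torsion-free/even hypothesis (via \cite[Proposition 2.2]{bafrra:ecr}) to make the Chern character monic, both deduce injectivity of $K^*(\iota)$ from the resulting commutative square, and both observe that the image of $K^*(\iota)$ lands in $P_{K\?_B}(\gamma;\varSigma)$ because $\iota$ factors through the equivariant $1$-skeleton. But the proof has a genuine gap at exactly the point you flag and then decline to resolve: surjectivity of $K^*(\iota)$ onto $P_{K\?_B}(\gamma;\varSigma)$. Passing through the conewise Chern transformation $ct$ and invoking the cohomological identification of \cite{bafrra:ecr} only tells you what happens rationally; it cannot by itself determine which elements of $P_{H(K\Q^*)}(x;\varSigma)$ are hit by \emph{integral} Borel $K$-classes, and "freeness ensures that no information is lost on rationalising" is not an argument --- freeness gives injectivity, which you already have, not surjectivity. (Your phrasing is also turned around: since the image is already contained in $P_{K\?_B}(\gamma;\varSigma)$, the danger is that it is a \emph{proper subalgebra}, not an overring; and "finite index" is not the relevant notion for these non-finitely-generated modules.)

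The paper closes this gap with a successive-approximation argument that your proposal lacks. Given $f\in P_{K\?_B}(\gamma;\varSigma)$ with zero augmentation, one writes $ch(f)$ as $H^*(\iota)(f')$ for some $f'\in P_{H(K\Q^*)}(x;\varSigma)$ using the cohomological theorem, notes that $f'$ has some filtration $q_1\geq 1$ with respect to the ideal $(x)$, and uses the \emph{integrality} of the Chern character in lowest filtration to produce $f_1\in K^*(ET^n\times_{T^n}\xvs)$ with $K^*(\iota)(f_1)\equiv f$ mod $I^{q_1+1}$. Iterating on $f-K^*(\iota)(f_1)$ yields a sequence $f_1, f_2,\dots$ with strictly increasing filtrations, and the crucial structural input is that \emph{both} $K^*(ET^n\times_{T^n}\xvs)$ and $P_{K\?_B}(\gamma;\varSigma)$ are $I$-adically complete, so that $f_1+f_2+\cdots$ converges to an element $f^\circ$ with $K^*(\iota)(f^\circ)=f$. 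Without this filtration-by-filtration lifting and the completeness of both sides, the "delicate point" you name remains open, and it is the entire content of the theorem beyond what Theorem \ref{borkdwps} already gives.
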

\begin{proof}
By \cite[Proposition 2.2]{bafrra:ecr}, $H^*_{T^n}(\xvs)$ is also
torsion free and even dimensional, and isomorphic to 
$P_H(x;\varSigma)$. So the Chern character 
$ch\colon K^*(ET^n\times_{T^n}\xvs)\to H^*_{T^n}(\xvs;K\Q^*)$ is 
monic, and identifies $K^*(ET^n\times_{T^n}\xvs)$ with a subring of 
$P_{H(K\Q^*)}(x;\varSigma)$. By the naturality of $ch$, the inclusion 
of the fixed point set and the projection of \eqref{borelproj} 
induce a commutative diagram
\begin{equation}\label{chcd}
\begin{CD}
\prod K^*[[\gamma]]@<K^*(\iota)<<K^*(ET^n\times_{T^n}\xvs)\!
@<{K^*(\pi)}<<\hspace{-10pt}K^*[[\gamma]]\\
@V{ch}VV@V{ch}VV@VV{ch}V\\
\prod S\2_{K\Q^*}(x)@<<H^*(\iota)<P_{H(K\Q^*)}(x;\varSigma)
@<<H^*(\pi)<S\2_{K\Q^*}(x)\quad
\end{CD}\quad,
\end{equation}
in which $H^*(\iota)$, $H^*(\pi)$, $K^*(\pi)$, and all maps $ch$, 
are monic. It follows that $K^*(\iota)$ is also monic, and that 
we may identify the elements $\gamma_j$ in 
$K^*(ET^n\times_{T^n}\xvs)$, for $1\leq j\leq n$. The image of 
$K^*(\iota)$ automatically lies in the subalgebra 
$P_{K\!_B}(\gamma;\varSigma)<\prod K^*[[\gamma]]$, because $\iota$ 
factors through the equivariant $1$-skeleton of $\xvs$; so it 
remains to show that the image is the entire subalgebra.

Note that the augmentation ideal $I$ of $K^*[[\gamma]]$ is 
generated by the $\gamma_j$, and its image $ch(I)=(x)$ is 
generated by the $x_j$. Furthermore, the filtration by powers 
of $I$ coincides with the skeletal filtration for $K^*(BT^n)$.

Choose $f\letbe f(\gamma)$ in $P_{K\?_B}(\gamma;\varSigma)$, and 
assume that its augmentation is zero without loss of generality. 
So $ch(f)=f(1-e^{zx})$ is a piecewise formal power series in 
$\prod S_{K\Q^*}(x)$, and may be rewritten as a piecewise 
polynomial in the variables $x$ over $K\Q^*$. As such, it takes 
the form $H^*(\iota)(f')$ for some element $f'= f'(x)$ in 
$P_{H(K\Q^*)}(x;\Sigma)$. Since $f'(0)=0$, it has filtration 
$q_1\geq 1$ with respect to $(x)$; so the integrality properties 
of $ch$ ensure the existence of an element $f_1=f_1(x)$ in 
$K^*(ET^n\times_{T^n}\xvs)$, such that $ch(f_1)\equiv f'$ mod 
$I^{q_1+1}$. Hence $K^*(\iota)(f_1)\equiv f$ mod $I^{q_1+1}$ 
in $P_{K\?_B}(\gamma;\varSigma)$.

Now iterate this procedure on $f-K^*(\iota)(f_1)$, to obtain a 
sequence of elements $(f_n=f_n(x))$ in $K^*(ET^n\times_{T^n}\xvs)$ 
for which
\[
K^*(\iota)(f_n)\;\equiv\;f-K^*(\iota)(f_1+\dots+f_{n-1})
\;\;\text{mod}\;\; I^{q_n+1},
\]
with $q_1<\dots<q_n$. Since $K^*(ET^n\times_{T^n}\xvs)$ and 
$P_{K\?_B}(\gamma;\varSigma)$ are $I$-adically complete, it 
follows that $f_1+\dots+f_n+\cdots$ converges to an element 
$f^\circ$ in the former, and that $K^*(\iota)(f^\circ)=f$ in 
the latter. So $K^*(\iota)$ is epic, as required.
\end{proof}

We now turn to the cobordism versions of our previous two
results.

In \cite{td:bgm}, tom Dieck introduces the \emph{bundling transformation}
 $\alpha\colon MU^*_{T^n}(Y)\ra MU^*(ET^n\times_{T^n}Y)$, 
which is proven in \cite{coma:ctc} to be completion at the augmentation 
ideal. If $Y=*$, then $\alpha$ reduces to the homomorphism 
\eqref{compaugmu}, and identifies $MU^*(BT^n)$ with $MU^*[[u]]$, as in
Example \ref{defbcd}. Each $u_j$ is the cobordism Euler class of the
$j$th canonical line bundle over $BT^n$, for $1\leq j\leq n$.

The Boardman homomorphism $bh\colon MU^*(ET^n\times_{T^n}Y)\to
H^*_{T^n}(Y;\HwMU^*)$ is induced by the Hurewicz morphism
\[
MU\;\simeq\;S^0\?\wedge MU\;
\stackrel{i\wedge 1}{\llra}\;\HwMU,
\]
by analogy with \eqref{defch}. On coefficient rings it embeds $L^*$ in 
$\HwMU^*$ by the Hurewicz homomorphism, and on $\C P^\infty$ it 
embeds $MU^*(\C P^\infty)\cong L^*[[u_1]]$ in 
$\HwMU^*(\C P^\infty)\cong H_*(MU)[[x_1]]$  
by $bh(u_1)=\sum_{i\geq 0}b_ix_1^{i+1}$. Further properties of $bh$ 
may be found in \cite{ada:shg}, for example.

We are now in a position to state the cobordism versions of 
Theorems \ref{borkdwps} and \ref{borkxsig}; they are verified by 
substituting $bh$ for $ch$ in each of the proofs.  
\begin{thm}\label{bormudwps}
For any divisive weighted projective space, $\MU^*(ET^n\times_{T^n}\Pc)$
is isomorphic as an $L^*[[u]]$-algebra to $P_{MU\!_B}(u;\varSigma)$; with 
respect to this identification, the bundling transformation
$\alpha\colon\MU_{T^n}^*(\Pc)\to\MU^*(ET^n\times_{T^n}\Pc)$ corresponds
to the conewise completion homomorphism $\wedge^{MU}$, and the Boardman 
homomorphism 
$bh\colon \MU^*(ET^n\times_{T^n}\Pc)\to H_{T^n}^*(\Pc;\HwMU^*)$ 
corresponds to the conewise Boardman transformation $bt$. 
\end{thm}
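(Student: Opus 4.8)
The plan is to reproduce the proof of Theorem \ref{borkdwps} line by line, replacing each $K$-theoretic ingredient by its cobordism counterpart: Theorem \ref{supmain}.2 in place of Theorem \ref{supmain}.1, tom Dieck's bundling transformation $\alpha$ in place of the Atiyah-Segal map $\lambda$, and the Boardman homomorphism $bh$ (with transformation $bt$) in place of the Chern character $ch$ (with transformation $ct$).

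First I would invoke Theorem \ref{supmain}.2, which identifies $\MU_{T^n}^*(\Pc)$ with $P_{MU}(\vsc)$ as an $MU_{T^n}^*$-algebra. The decisive external input is that $\alpha$ realizes completion at the augmentation ideal $I$, established in \cite{coma:ctc}; this is the exact cobordism analogue of the Atiyah-Segal completion theorem used for $\lambda$. Since completion commutes with the limit defining $P_{MU}(\vsc)$ --- as recorded in the discussion following \eqref{cplbthomo} --- the completion of $P_{MU}(\vsc)$ at $I$ coincides with the conewise completion $\wedge^{MU}$, whose target is $P_{MU\!_B}(u;\vsc)$. Consequently the target of $\alpha$ is necessarily isomorphic to $P_{MU\!_B}(u;\vsc)$ as an $L^*[[u]]$-algebra, and under this identification $\alpha$ corresponds to $\wedge^{MU}$.

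Second, for the Boardman statement I would check the claim on coefficient rings, where $bh\colon MU^*(BT^n)\to H^*(BT^n;\HwMU^*)$ sends each $u_j$ to $\sum_{i\geq 0}b_ix_j^{i+1}$ for $1\leq j\leq n$; this follows from the stated behaviour of $bh$ on $\C P^\infty$ and agrees precisely with the definition of $bt$ in \eqref{charag} (equivalently Definition \ref{chartdef}). As $bh$ is a natural transformation of cohomology theories, it is compatible with the inclusion $\coprod BT^n\subset ET^n\times_{T^n}\Pc$ induced by the fixed-point set; exactly as in Theorem \ref{borkdwps}, it then follows that $bh$ corresponds to $bt$, which completes the identification.

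I expect the genuine content to be concentrated not in any calculation but in the appeal to the cobordism completion theorem: the assertion that $\alpha$ is completion at $I$, drawn from \cite{coma:ctc}, is what drives the whole argument and is considerably deeper than the formal manipulations around it. The only other point deserving care is that conewise completion agrees with global completion, which rests on completion commuting with the finite limit over the finitely many cones of $\vsc$; granting these two facts, the remaining steps are purely formal and structurally identical to the $K$-theory case.
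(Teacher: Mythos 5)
Your proposal is correct and coincides with the paper's own argument: the paper explicitly states that Theorem \ref{bormudwps} is "verified by substituting $bh$ for $ch$" in the proof of Theorem \ref{borkdwps}, which is precisely the substitution you carry out (Theorem \ref{supmain}.2 for \ref{supmain}.1, the Comeza\~na--May completion theorem for Atiyah--Segal, and the coefficient check plus naturality of $bh$ with respect to the fixed-point inclusion for the corresponding step with $ch$). Your closing remarks — that the real content lies in the completion theorem of \cite{coma:ctc} and in the agreement of conewise with global completion — accurately locate where the weight of the argument sits.
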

Theorem \ref{bormuxsig} applies to projective toric varieties $\xvs$ 
whose integral cohomology is free and even.
\begin{thm}\label{bormuxsig}
The Borel equivariant $\MU$-theory of any such $\xvs$ is isomorphic 
as an $L^*[[u]]$-algebra to $P_{M\1U\?_B}(u;\varSigma)$.
\end{thm}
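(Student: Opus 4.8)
The plan is to run the proof of Theorem~\ref{borkxsig} essentially verbatim, substituting the Boardman homomorphism $bh$ for the Chern character $ch$, the theory $\MU^*(-)$ for $K^*(-)$, the indeterminates $u$ for $\gamma$, the coefficient ring $L^*[[u]]$ for $K^*[[\gamma]]$, and the target coefficients $\HwMU^*$ for $K\Q^*$. First I would invoke \cite[Proposition 2.2]{bafrra:ecr} once more: for any such $\xvs$ it gives that $H^*_{T^n}(\xvs;\HwMU^*)$ is torsion free and concentrated in even dimensions, and isomorphic to $P_{H(H\wedge MU^*)}(x;\varSigma)$. Since $bh$ is a rational isomorphism (as recorded for the conewise map $bt$ in Remarks~\ref{comp}), this torsion-freeness forces $bh\colon\MU^*(ET^n\times_{T^n}\xvs)\to H^*_{T^n}(\xvs;\HwMU^*)$ to be monic, thereby identifying the Borel cobordism ring with a subring of $P_{H(H\wedge MU^*)}(x;\varSigma)$.

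Next I would reproduce the commutative diagram \eqref{chcd}, now with $bh$ in place of $ch$, $\MU^*$ in place of $K^*$, and $L^*[[u]]$ in place of $K^*[[\gamma]]$; the horizontal maps are again induced by the fixed-point inclusion $\iota$ and the bundle projection $\pi$ of \eqref{borelproj}. Exactly as in the $K$-theory case, $H^*(\iota)$, $H^*(\pi)$, $\MU^*(\pi)$, and the three copies of $bh$ are monic, so a short diagram chase shows $\MU^*(\iota)$ is monic and lets us identify the classes $u_j$ inside $\MU^*(ET^n\times_{T^n}\xvs)$. Because $\iota$ factors through the equivariant $1$-skeleton of $\xvs$, the image of $\MU^*(\iota)$ automatically lands in the subalgebra $P_{M\1U\?_B}(u;\varSigma)$ of $\prod L^*[[u]]$, and it remains only to prove that this containment is an equality.

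The surjectivity argument is where the genuine work lies, and it proceeds by successive $I$-adic approximation. Let $I$ be the augmentation ideal of $L^*[[u]]$, generated by the $u_j$, whose $bh$-image is the ideal $(x)$ generated by the $x_j$; the filtration by powers of $I$ again coincides with the skeletal filtration for $\MU^*(BT^n)$. Given $f$ in $P_{M\1U\?_B}(u;\varSigma)$ of zero augmentation, its image $bh(f)$ is a piecewise formal power series over $\HwMU^*$ of positive filtration, so the integrality properties of $bh$ should produce an element $f_1$ of $\MU^*(ET^n\times_{T^n}\xvs)$ with $\MU^*(\iota)(f_1)\equiv f$ modulo a higher power of $I$; iterating on $f-\MU^*(\iota)(f_1)$ and invoking the $I$-adic completeness of both sides yields a convergent series whose sum maps to $f$, so that $\MU^*(\iota)$ is epic. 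The hard part will be confirming that this integrality step survives the passage from $K\Q^*$ to the larger graded coefficient ring $\HwMU^*\cong H_*(MU)$: one must check that the Hurewicz image of $L^*$ is rich enough for the lifting of successive filtration quotients to go through just as it does under $ch$. Granting that it does, the identification is complete.
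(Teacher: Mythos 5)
Your proposal is correct and coincides with the paper's own argument: the paper proves Theorem \ref{bormuxsig} precisely by substituting $bh$ for $ch$ throughout the proof of Theorem \ref{borkxsig} and interpreting $P_{M\1U\?_B}(u;\varSigma)$ as an $L^*[[u]]$-subalgebra of $\prod L^*[[u]]$, exactly the translation you carry out. The integrality step you flag as the "hard part" does go through just as in the $K$-theoretic case, since $bh(u_j)=x_j+b_1x_j^2+\cdots$ has leading coefficient $1$, so each successive filtration quotient lifts integrally.
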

For the proof, we interpret $P_{M\1U\?_B}(u;\varSigma)$ as an 
$L^*[[u]]$-subalgebra of $\prod L^*[[u]]$. 

Note that Theorems \ref{borkxsig} and \ref{bormuxsig} apply to all
smooth toric varieties, and to a wider class of singular examples than
Theorem \ref{supmain}; in particular, they hold for iterated products
of arbitrary weighted projective spaces.  They therefore provide
evidence for the conjecture that $K_{T^n}^*(\xvs)$ and
$MU_{T^n}^*(\xvs)$ are isomorphic to $P_K(\alpha;\varSigma)$ and
$P_{M\1U}(\varSigma)$ respectively, for \emph{any} projective toric
variety whose integral cohomology is free and even. Without further
proof, however, the most we can claim is that each pair of algebras
share the same completion.

Combining Theorem \ref{bormuxsig} with \cite[Theorem 6.4]{krum:acr}
confirms that the equivariant algebraic cobordism ring of a smooth
projective toric variety $Y$ is isomorphic to
$MU^*(ET^n\times_{T^n}Y)$. This fact also follows from 
\cite[Theorem 3.7]{kikr:ecf}. Most recently, Gonzalez and Karu have 
defined \emph{operational} equivariant algebraic cobordism, and 
\cite[Theorem 7.2]{goka:bac} proves that their ring is isomorphic to 
$MU^*(ET^n\times_{T^n}Y)$ for any quasiprojective toric variety $Y$ to 
which Theorem \ref{bormuxsig} applies, singular or otherwise. 
No analogous coincidences arise in Sections \ref{coap} or 
\ref{smca}, because the coefficient rings $L^*[[u]]$ of the algebraic 
theories are complete. 

%
%
%
%
%
%
%
%

\section{The smooth case}\label{smca}

A version of Theorem \ref{supmain} for \emph{smooth} fans may well be
known to experts, but statements are difficult to find in the
literature.  There are, however, explicit references such as
\cite{vevi:hak} and \cite{anpa:okt} to analogous results in
equivariant \emph{algebraic} and \emph{operational} $K$-theory
respectively. The goal of this section is to outline a proof (and
certain consequences) of Theorem \ref{supmain} for smooth polytopal
fans $\varSigma$, thereby confirming that all three forms of
$K$-theory agree on the corresponding $\xvs$. In this context,
\emph{polytopal} indicates that $\varSigma$ is the normal fan of a
compact simple polytope $P_\varSigma$, and therefore is
\emph{complete}. We expect that our results may be extended to more
general fans by applying the methods of \cite{fra:dtv}.

Our proof relies on the work of Harada and Landweber \cite{HaL}, which
deals with symplectic manifolds $(M,\omega)$ equipped with a
Hamiltonian action of $T^n$. In \cite[Definition 4.1]{HaL}, such an
action is defined to be \emph{GKM} whenever the fixed point set is
finite and the isotropy weights at each fixed point $p$ are pairwise
linearly independent. The latter requirement may be restated in the
notation of Section \ref{gegkmth} by decomposing the $n$-dimensional
representation $\rho_p$ of $T^n$ on $T_p(M)$ as a sum
$\bigoplus_{j=1}^n\alpha^{J_{p,j}}$ of $1$-dimensionals, and insisting
that the $J_{p,j}$ are pairwise linearly independent in $\Z^n$ for
each fixed point $p$.
 
Let $\varSigma$ be smooth and polytopal, so that $\xvs$ is smooth, 
compact and projective. In fact, $\xvs$ is also a 
symplectic toric manifold in the sense of \cite[Part XI]{cds:lsg},
with respect to the induced symplectic structure arising 
from its projective embedding. The associated  moment
map $\varPhi$ can be identified with the orbit map 
$\xvs \to \xvs/T^n \cong P_\varSigma$; moreover, $P_\varSigma$
is a \emph{Delzant polytope} \cite{del:hpi, cds:lsg}. The fixed 
points of the action are precisely the inverse images of the 
vertices of $P_\varSigma$; in particular, the set of fixed points 
is finite. Furthermore, since $P_\varSigma$ is Delzant, the
edges incident on any vertex $t$ of $P_\varSigma$ are specified by 
$n$ primitive integral vectors $J_{t,s_1}, \dots, J_{t,s_n}$, which 
form a basis for the standard lattice $\Z^n$ in the ambient $\R^n$. 
Then $T^n\acts\xvs$ is given in a neighborhood of $t$ by the
representation $\alpha_t\letbe\bigoplus_{j=1}^n\alpha^{J_{t,s_j}}$, 
and the action is GKM. 

We may now apply \cite[Theorem 4.4]{HaL} to $T^n\acts\xvs$, by 
noting that $\varPhi$ has compact domain, so every component 
is proper and bounded below.
\begin{thm}\label{hal44}
For any smooth polytopal fan $\varSigma$, the inclusion of the fixed 
point set induces an isomorphism of $K_{T^n}^*(\xvs)$ with
\[
\varGamma_\varSigma\;=\;\left\{y:\text{$1-\alpha^{J_{t,s}}$ divides
$y_s-y_t$ for all $s\prec t$}\right\}\;\leq\;\prod_t
K^*_{T^n},
\]
where $t$ ranges over the vertices of $P_\varSigma$.
\end{thm}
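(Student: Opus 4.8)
The plan is to confirm that the hypotheses of \cite[Theorem 4.4]{HaL} are met by the action $T^n\acts\xvs$, and then to transcribe their conclusion into the form stated here. The discussion preceding the theorem has already verified that this action is Hamiltonian and GKM: the fixed point set is finite and indexed by the vertices $t$ of the Delzant polytope $P_\varSigma$, and the isotropy representation at each $t$ is $\alpha_t=\bigoplus_{j=1}^n\alpha^{J_{t,s_j}}$, whose weights $J_{t,s_j}$ form a lattice basis and are therefore pairwise linearly independent. The only remaining input concerns the moment map $\varPhi$. Here I would simply note that $\xvs$ is compact, so $\varPhi$ has compact domain; consequently a generic component of $\varPhi$ is proper, bounded below, and serves as the Morse function whose ascending-flow analysis underlies the injectivity and surjectivity arguments of \cite{HaL}. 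This same generic component fixes the partial order $\prec$ on vertices, with $s\prec t$ precisely when $s$ and $t$ are joined by an edge along which the chosen component increases.

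Granting the hypotheses, \cite[Theorem 4.4]{HaL} applies and asserts that the restriction homomorphism $K_{T^n}^*(\xvs)\to\prod_t K_{T^n}^*$ induced by the inclusion of the fixed point set is injective, with image the GKM subring determined by the edge compatibility conditions of the associated GKM graph. It therefore remains only to identify this subring with $\varGamma_\varSigma$.

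This identification rests on matching the edges of the GKM graph to the combinatorics of $P_\varSigma$. For the symplectic toric manifold $\xvs$, the one-dimensional orbits whose stabiliser is a codimension-one subtorus are precisely the invariant $2$-spheres lying over the edges of $P_\varSigma$; each such edge joins two vertices $s\prec t$ and carries the isotropy weight $J_{t,s}$ occurring among the basis vectors at $t$. The GKM relation along this edge requires that the $K$-theoretic Euler class of the corresponding normal representation, namely $1-\alpha^{J_{t,s}}$, divide the difference $y_s-y_t$ of restrictions, which is exactly the defining condition of $\varGamma_\varSigma$. I expect the main point requiring care to be twofold, and genuinely integral rather than computational: first, that imposing the condition only for ordered pairs $s\prec t$ loses nothing, since $1-\alpha^{J_{t,s}}$ and $1-\alpha^{-J_{t,s}}$ generate the same ideal up to a unit and so the asymmetric relation is well posed; and second, that these divisibilities hold over $\Z$ rather than merely after tensoring with $\Q$, so that the argument genuinely computes $K_{T^n}^*(\xvs)$ integrally. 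This is exactly the situation for which \cite{HaL} was designed, the GKM hypothesis of pairwise linear independence of the weights being what guarantees that the relevant Euler classes behave well integrally. Once this matching is in place, the asserted isomorphism follows.
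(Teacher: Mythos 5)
Your proposal is correct and follows essentially the same route as the paper: the paper's argument consists precisely of the pre-theorem discussion verifying that $T^n\acts\xvs$ is a Hamiltonian GKM action (finite fixed points over the vertices of the Delzant polytope, isotropy weights forming a lattice basis and hence pairwise linearly independent), noting that compactness of $\xvs$ makes the components of $\varPhi$ proper and bounded below, and then citing \cite[Theorem 4.4]{HaL}. Your additional remarks — that the edge conditions of the GKM graph match the edges of $P_\varSigma$, and that $1-\alpha^{J}$ and $1-\alpha^{-J}$ generate the same ideal up to a unit so the asymmetric condition over $s\prec t$ is harmless — are correct and merely make explicit what the paper leaves implicit in transcribing the conclusion of \cite{HaL} into the form $\varGamma_\varSigma$.
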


Theorem \ref{hal44} also holds for $\MU_{T^n}^*(\xvs)$, following our
observations on $\MU_{T^n}\,$-Euler classes in Section \ref{gegkmth}.
\begin{cor}\label{cormain}
For any smooth polytopal fan $\varSigma$:
\begin{enumerate}
\item[{\bf 1.}]
$K_{T^n}^*(\xvs)$ is isomorphic as an $S^\pm_{K^*}(\alpha)$-algebra to
$P_K(\alpha;\varSigma)$;
\item[{\bf 2.}]
$\MU_{T^n}^*(\xvs)$ is isomorphic as an $MU_{T^n}^*$-algebra to
$P_{M\1U}(\varSigma)$;
\item[{\bf 3.}]
$H_{T^n}^*(\xvs;R)$ is isomorphic as an $S_R(x)$-algebra to 
$P_{H\!R}(x;\varSigma)$.
\end{enumerate}
\end{cor}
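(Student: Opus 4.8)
The plan is to imitate the proof of Theorem~\ref{supmain}, replacing the explicit fan $\vsc$ of a divisive weighted projective space by the normal fan $\varSigma$ of $P_\varSigma$, and replacing the GKM ring $\varGamma_{\bP(\chi)}$ of Proposition~\ref{gkmdivwps} by the ring $\varGamma_\varSigma$ of Theorem~\ref{hal44}, together with its stated analogue for $\MU_{T^n}^*$ and the corresponding additive version for $H_{T^n}^*(-;R)$. As there, it suffices to produce a natural $E^*_{T^n}$-algebra isomorphism $\varGamma_\varSigma\cong\lim_{\scat{gcalg}}\EV=P_E(\varSigma)$ in each of the cases $E=K$, $\MU$, $H$; I would write out $E=K$ in full and observe that the other two require only the substitution of the appropriate Euler classes from \eqref{etdij}. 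The first point is to reconcile the two indexing schemes. The ring $\varGamma_\varSigma$ is indexed by the vertices $t$ of $P_\varSigma$, equivalently by the top-dimensional cones $\sigma_t$ of $\varSigma$ and hence by the $T^n$-fixed points of $\xvs$, with one divisibility condition per edge $s\prec t$; under the normal-fan correspondence such an edge is dual to the codimension-$1$ cone $\sigma_{s,t}=\sigma_s\cap\sigma_t$, the common wall of the adjacent maximal cones. By contrast $P_K(\alpha;\varSigma)=\lim KV$ is, by Definition~\ref{etdiag}, a subalgebra of $\prod_\sigma KV(\sigma)$ indexed by \emph{all} cones, with components $f^\sigma\in S^\pm_{K^*}(\alpha)/J_\sigma$ compatible under the projections $q_{\tau,\sigma}$.

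Next I would match the combinatorial data carried by the two descriptions. Because $\varSigma$ is smooth, the $n$ rays generating a maximal cone $\sigma_t$ form a lattice basis, and the isotropy weight $J_{t,s}$ of Theorem~\ref{hal44} is (up to sign) the dual covector to the single ray of $\sigma_t$ not contained in the wall $\sigma_{s,t}$. Thus $J_{t,s}$ is a primitive integral vector orthogonal to the rays spanning $\sigma_{s,t}$, so it spans $\R_{\sigma_{s,t}^\perp}$ and agrees up to sign with the generator $w$ of \eqref{linforms} for $\sigma_{s,t}$. Since $\alpha^{-w}$ is a unit, $1-\alpha^{J_{t,s}}$ and $1-\alpha^{w}$ generate the same principal ideal $J_{\sigma_{s,t}}$ of Example~\ref{defplp}; the analogous statements hold for $e(\alpha^{J_{t,s}})$ (whose sign change is absorbed by a unit power series in the formal group law) and for $w^{tr}x$ in the $\MU$ and $H$ cases. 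Hence the edge condition of $\varGamma_\varSigma$ at $s\prec t$ is precisely congruence modulo $J_{\sigma_{s,t}}$.

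I would then construct the isomorphism exactly as in Theorem~\ref{supmain}. Given $y=(y_t)\in\varGamma_\varSigma$, set $h(y)^\sigma\letbe y_t\bmod J_\sigma$ for any maximal $\sigma_t\supseteq\sigma$; compatibility under the projections $q_{\tau,\sigma}$ is then automatic, since these are further quotient maps. The content is well-definedness, i.e.\ that $y_t\equiv y_{t'}\bmod J_\sigma$ whenever $\sigma_t,\sigma_{t'}\supseteq\sigma$. For a wall this is the edge condition just identified; for a cone $\sigma$ of higher codimension I would propagate along walls, joining any two maximal cones containing $\sigma$ by a chain of maximal cones whose consecutive members share a wall that also contains $\sigma$, so that across each step the two components agree modulo the wall ideal, which lies in $J_\sigma$ because the wall contains $\sigma$. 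Transitivity gives the required congruence. Finally $h$ is injective, since distinct $y$ differ at some maximal cone where $J_{\sigma_t}=0$, and surjective, since the maximal-cone components of any element of $\lim KV$ satisfy all wall conditions, hence lie in $\varGamma_\varSigma$, and recover the entire family via $q_{\sigma_t,\sigma}$; it is visibly an $S^\pm_{K^*}(\alpha)$-algebra map.

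The step I expect to be the main obstacle, and the only genuinely new ingredient beyond the divisive case (where the cones $\sigma_A$ satisfied $\sigma_A\cap\sigma_{A'}=\sigma_{A\cdot A'}$ and propagation was immediate), is the connectivity used in the propagation. Here I would invoke polytopality: under the normal-fan duality the maximal cones containing $\sigma$ correspond to the vertices of the face $G$ of $P_\varSigma$ dual to $\sigma$, and the walls containing $\sigma$ to the edges of $G$, so ``joined by a chain of walls through $\sigma$'' is exactly connectedness of the $1$-skeleton of the polytope $G$, which holds trivially. This is where completeness of $\varSigma$ (rather than mere smoothness) enters, and it is precisely the hypothesis that allows the wall-only conditions defining $\varGamma_\varSigma$ to recover the full conewise compatibility defining $P_E(\varSigma)$.
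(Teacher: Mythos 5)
Your proposal is correct and takes essentially the same route as the paper: the paper's entire proof of Corollary \ref{cormain} is the single remark that it ``follows from Theorem \ref{hal44} by adapting the proof of Theorem \ref{supmain}'', and your argument---identifying the edge weights $J_{t,s}$ with the primitive generators of the wall ideals $J_{\sigma_{s,t}}$, restricting to maximal-cone components, and propagating congruences modulo $J_\sigma$ along chains of walls through $\sigma$ using connectedness of the $1$-skeleton of the dual face of $P_\varSigma$---is precisely that adaptation written out in full. The wall-chaining step you single out is indeed the one genuinely new ingredient beyond the divisive case (where any two maximal cones of $\vsc$ already share a wall), and your justification of it via polytopality is sound.
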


Corollary \ref{cormain} follows from Theorem \ref{hal44} by adapting 
the proof of Theorem \ref{supmain}.

Since $\varSigma$ is smooth, each algebra of Corollary \ref{cormain}
admits an alternative description in terms of the face ring
$R[\varSigma]$, inspired by the isomorphism $H_{T^n}^*(\xvs;R)\cong
R[\varSigma]$ mentioned in Section \ref{coap}. The face ring
associates $2$--dimensional indeterminates $y_j$ to the the rays $v_j$
of $\varSigma$ for $1\leq j\leq m$, and may first have been described
as a limit in \cite{paravo:csa}.

Given any set $\omega$ of rays, it is convenient to denote the set of 
variables $\{y_j:v_j\in\omega\}$ by $y(\omega)$, and to abbreviate 
the monomial $\prod_{y(\omega)}y_j$ to $y_\omega$.
\begin{defn}\label{defsr}
  The \emph{face diagram} $S\2_R\colon\catsop\ra\catgca$ has
\begin{equation}\label{srsig}
  S\2_R(\sigma)\;=\;S\2_R(y(\sigma))\sands
  S\2_R(p_{\tau,\sigma})\;=\;f_{\tau,\sigma}\,,
\end{equation}
where $f_{\tau,\sigma}\colon S\2_R(\tau)\ra S\2_R(\sigma)$ is induced 
by annihilating those $y_i$ for which $v_i\in\tau\setminus\sigma$.
\end{defn}
The face ring $R[\varSigma]=R[y;\varSigma]$ is the limit of $S\2_R$,
and is additively generated by those monomials $y^J\letbe
y_1^{j_1}\cdots y_m^{j_m}$ whose support $\prod_{j_k\neq 0}y_k$ is
$y_\sigma$ for some cone $\sigma$, where $J=(j_1,\dots,j_m)$ lies in
$\Z_{\geq}^m$. There is therefore a canonical isomorphism
\[
S\2_R(y)/(y_\omega:\omega\notin\varSigma)\;
\stackrel{\cong}{\lra}\;R[y;\varSigma] 
\] 
of graded $S\2_R(y)$-algebras. 

The version required for $K_{T^n}^*(\xvs)$ involves $0$--dimensional 
indeterminates $\beta_j$, for $1\leq j\leq m$. 
\begin{defn}\label{deflf}
The \emph{Laurent face diagram} $S_{K^*}^\pm\colon\catsop\ra\catgca$ has
\begin{equation}\label{laursrsig}
  S_{K^*}^\pm(\sigma)\;=\;S_{K^*}^\pm(\beta(\sigma))\sands
  S_{K^*}^\pm(p_{\tau,\sigma})\;=\;f_{\tau,\sigma}\,,
\end{equation}
where $f_{\tau,\sigma}\colon S_{K^*}^\pm(\tau)\ra S_{K^*}^\pm(\sigma)$ is 
induced by mapping those $\beta_j$ to $1$ for which 
$v_j\in\tau\setminus\sigma$.
\end{defn}
In this case the \emph{Laurent face algebra} 
$F_K[\varSigma]=F_K[\beta;\varSigma]$ is the limit of 
$S_{K^*}^\pm$, 
and is additively generated by those monomials $(1-\beta)^J\letbe
(1-\beta_1)^{j_1}\cdots(1-\beta_m)^{j_m}$ whose support
$\prod_{j_k\neq 0}(1-\beta_k)$ is $(1-\beta)_\sigma$ for some cone
$\sigma$, where $J=(j_1,\dots,j_m)$ lies in $\Z^m$. There is 
therefore a canonical isomorphism
\[
S_{K^*}^\pm(\beta)/((1-\beta)_\omega:\omega\notin\varSigma)\;
\stackrel{\cong}{\lra}\;F_K[\beta;\varSigma]
\] 
of graded $S_{K^*}^\pm(\beta)$-algebras.

A similar construction is possible for the $\MU_{T^n}\,$-analogue
$F_{MU}[\varSigma]$, and for the versions involving formal power 
series used below. In the case of cohomology, $F_{H\!R}[\varSigma]$ 
coincides with the standard face ring $R[\varSigma]$, so we retain 
the latter notation.

We show that $P_K(\alpha;\varSigma)$ is isomorphic to 
$F_K[\beta;\varSigma]$ by appealing to the defining diagrams; in the 
context of algebraic $K$-theory, a proof has long been available 
\cite{vevi:hak}.

By analogy with \eqref{raysvsc}, we consider the $n\times m$ matrix 
\begin{equation}\label{vstardef}
\xi\;=\;\xi_\varSigma\;\letbe\;
\begin{pmatrix}
v_1&\!\!\cdots\!\!&v_m
\end{pmatrix}.
\end{equation}
This notation is consistent with Remark \ref{functorial}, for we may 
view $\xi$ as a map $\varSigma'\to\varSigma$ of fans; the rays of 
$\varSigma'$ are the standard basis vectors in $\R^m$, and its cones 
$\sigma'\letbe\{e_{i_1},\dots,e_{i_k}\}$ correspond bijectively to the 
cones $\sigma\letbe\{v_{i_1},\dots,v_{i_k}\}$ of $\varSigma$. For any 
$n$--dimensional $\sigma$ we write $\xi_\sigma$ for the $n\times n$ 
submatrix of $\xi$ whose columns generate $\sigma$. 
The smoothness of $\varSigma$ guarantees that every $\xi_\sigma$ is 
invertible over $\Z$. So $\xi$ defines an epimorphism 
$\Z^m\ra\Z^n$, and $\xi^{tr}$ induces monomorphisms 
$S\2_R(x)\to S\2_R(y)$ and 
$S_{K^*}^\pm(\alpha)\to S_{K^*}^\pm(\beta)$ of graded 
rings; the latter maps $\alpha^J$ to $\beta^{{\xi}^{tr}J}$ for 
any $J\in\Z^n$. 

\begin{prop}\label{KTfacering}
For any smooth polytopal fan $\varSigma$, the matrix $\xi$ induces a 
natural isomorphism 
\[
\xi^*\colon P_K(\alpha;\varSigma)\lra F_K[\beta;\varSigma]
\]
of algebras over ${\xi}^{tr}\colon S_{K^*}^\pm(\alpha)\to S_{K^*}^\pm(\beta)$.
\end{prop}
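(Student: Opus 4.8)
The plan is to verify that the matrix $\xi$ induces a natural transformation $KV\to S^\pm_{K^*}$ of $\catsop$-diagrams, and then pass to limits. First I would check that $\xi$ induces, for each cone $\sigma$, a well-defined ring homomorphism $\xi^*_\sigma\colon KV(\sigma)\to S^\pm_{K^*}(\sigma)$. Recall from \eqref{laurpolysig} that $KV(\sigma)=S^\pm_{K^*}(\alpha)/J_\sigma$, where $J_\sigma$ is generated by the Euler classes $1-\alpha^{w_c}$ for $w_c$ spanning $\R_{\sigma^\perp}\cap\Z^n$. Applying $\xi^{tr}$ sends $\alpha^{w_c}$ to $\beta^{\xi^{tr}w_c}$, so the crucial point is that $\xi^{tr}w_c$ encodes precisely the divisibility relations defining the face diagram. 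Concretely, since $w_c$ is orthogonal to all rays $v_j$ of $\sigma$, the entries of $\xi^{tr}w_c$ vanish exactly in the positions $j$ with $v_j\in\sigma$; under the quotient map $f_{\tau,\sigma}$ of Definition \ref{deflf} those are the variables $\beta_j$ that survive, and the remaining $\beta_j$ are set to $1$. Thus $\xi^{tr}$ carries $J_\sigma$ into the kernel of $S^\pm_{K^*}(\alpha)\to S^\pm_{K^*}(\sigma)$, giving the desired homomorphism on quotients.

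Next I would confirm naturality, namely that these $\xi^*_\sigma$ commute with the structure maps of the two diagrams. For a morphism $p_{\tau,\sigma}$ the $KV$ side uses the projection $q_{\tau,\sigma}$ of \eqref{qolysig} and the face side uses $f_{\tau,\sigma}$ of \eqref{laursrsig}; both are ``kill the extra generators'' maps, so commutativity is a direct comparison of how $\xi^{tr}$ interacts with passing to a smaller cone. This is essentially the same observation as in Remark \ref{functorial}, where $\xi$ was already shown to induce a map of diagrams via $\xi^\dagger$; the smoothness of $\varSigma$ makes $\xi^\dagger$ behave well because each $\xi_\sigma$ is invertible over $\Z$. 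Taking limits then yields the algebra homomorphism
\[
\xi^*\colon P_K(\alpha;\varSigma)=\lim KV\;\lra\;\lim S^\pm_{K^*}=F_K[\beta;\varSigma]
\]
over $\xi^{tr}\colon S^\pm_{K^*}(\alpha)\to S^\pm_{K^*}(\beta)$, which is a monomorphism of scalar rings because $\xi$ is surjective.

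To see that $\xi^*$ is an isomorphism I would argue conewise. On each top-dimensional cone $\sigma$ the smoothness hypothesis makes $\xi_\sigma\in\GL(n,\Z)$, so the induced map $KV(\sigma)=S^\pm_{K^*}(\alpha)\to S^\pm_{K^*}(\sigma)=S^\pm_{K^*}(\beta(\sigma))$ is an isomorphism of Laurent polynomial rings in $n$ variables (the columns of $\xi_\sigma$ form a $\Z$-basis, so $\alpha^J\mapsto\beta^{\xi^{tr}J}$ is invertible). Since both $P_K(\alpha;\varSigma)$ and $F_K[\beta;\varSigma]$ are identified, via Definition \ref{defn}, with subalgebras of the product over maximal cones $\prod_\sigma S^\pm_{K^*}(\beta(\sigma))$, and $\xi^*$ is the restriction of the product of these conewise isomorphisms, it is automatically monic; one then checks that the compatibility (divisibility) conditions cutting out the two limits correspond exactly under the $\xi_\sigma$, so the image is the whole of $F_K[\beta;\varSigma]$.

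I expect the main obstacle to be the bookkeeping in the well-definedness step: verifying that $\xi^{tr}$ sends the Euler-class generators $1-\alpha^{w_c}$ of $J_\sigma$ precisely onto the defining relations of the Laurent face diagram, with the correct matching of which $\beta_j$ are annihilated against which rays lie in $\tau\setminus\sigma$. This hinges on the orthogonality characterization $w_c\in\R_{\sigma^\perp}$ translating into the vanishing pattern of $\xi^{tr}w_c$, which in turn relies on smoothness so that no spurious common factors or torsion appear. Once this dictionary between the two diagrams is established, the limit argument and the conewise-isomorphism count are routine, and naturality in $\varSigma$ follows from the functoriality already recorded in Remark \ref{functorial}.
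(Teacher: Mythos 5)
Your construction of the map coincides with the paper's: you build conewise homomorphisms $\xi^*_\sigma\colon KV(\sigma)\to S_{K^*}^\pm(\sigma)$, check naturality against the structure maps $q_{\tau,\sigma}$ and $f_{\tau,\sigma}$, and pass to limits. Where you diverge is the proof that the limit map is an isomorphism. The paper's observation is that smoothness makes $\xi^*(\sigma)$ an isomorphism for \emph{every} cone $\sigma$, not only the maximal ones: the rays of a $d$--dimensional smooth cone extend to a $\Z$-basis of $\Z^n$, so after a $\GL(n,\Z)$ change of coordinates the ideal $J_\sigma$ becomes $(1-\alpha_{d+1},\dots,1-\alpha_n)$ and $KV(\sigma)\to S_{K^*}^\pm(\beta(\sigma))$ is the evident isomorphism of Laurent rings in $d$ variables. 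An objectwise isomorphism of $\varSigma^{op}$-diagrams induces an isomorphism of limits, and the proof ends there, with no maximal-cone bookkeeping.

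Your endgame instead restricts to maximal cones (legitimate, since a polytopal fan is complete, so every cone is a face of a maximal one and both limits embed in the corresponding products); this yields injectivity. But the surjectivity step you describe as ``one then checks that the compatibility (divisibility) conditions cutting out the two limits correspond exactly'' is not routine bookkeeping, and it is the one place your argument has a genuine hole. Unwinding it: given $(h_\tau)$ compatible in $F_K[\beta;\varSigma]$, you must show that $g_\tau\letbe(\xi^*_\tau)^{-1}(h_\tau)$ satisfies $q_{\tau,\sigma}(g_\tau)=q_{\tau',\sigma}(g_{\tau'})$ whenever $\sigma\subseteq\tau\cap\tau'$, and by naturality this reduces precisely to the \emph{injectivity} of $\xi^*_\sigma$ on $KV(\sigma)$ for the lower-dimensional faces $\sigma$ --- that is, to the statement that $J_\sigma$ is the \emph{entire} kernel of $S_{K^*}^\pm(\alpha)\to S_{K^*}^\pm(\beta(\sigma))$. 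This concerns every cone of $\varSigma$, not just the top ones, and your proposal never establishes it; you only invoke smoothness via $\xi_\sigma\in\GL(n,\Z)$ for maximal $\sigma$. The statement is true and not hard: either run the paper's argument on each smooth cone (its rays extend to a $\Z$-basis), or note that $J_\sigma$ is the kernel of the group-algebra map $\Z[\Z^n]\to\Z[\Z^n/L]$ with $L=\R_{\sigma^\perp}\cap\Z^n$, which injects into $S_{K^*}^\pm(\beta(\sigma))$. But once you prove the conewise statement for all cones, you have an objectwise isomorphism of diagrams and can conclude directly at the level of limits, which subsumes your maximal-cone argument entirely; this is exactly the paper's route, and I recommend you adopt it rather than repair the deferred check.
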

\begin{proof}
The epimorphism $\xi\colon\Z^m\ra\Z^n$ maps the generator $e_j$ to the 
ray $v_j$, for all $1\leq j\leq m$. It therefore induces an isomorphism 
$\xi^*(\sigma)\colon KV(\sigma)\to S_{K^*}^\pm(\sigma)$ 
of algebras over $\xi^{tr}$, defined by 
$\xi^*(\sigma)(\alpha^J)=\beta^{\xi(\sigma)^{tr}J}$ for any  $J\in\Z^n$. 
Moreover, 
$f_{\tau,\sigma}\cdot {\xi}^*(\tau)={\xi}^*(\sigma)\cdot q_{\tau,\sigma}$
for every morphism $\tau\supseteq\sigma$ in $\cats$, because 
$\xi_\tau\supseteq \xi_\sigma$ as submatrices of $\xi$. 

So ${\xi}^*$ is a natural isomorphism of diagrams, and induces the required
isomorphism of limits.
\end{proof}
\begin{cor}
For any smooth polytopal fan $\varSigma$, there is an isomorphism
$K^*_{T^n}(\xvs)\to F_K[\beta;\varSigma]$ of algebras over
$\xi^{tr}$.
\end{cor}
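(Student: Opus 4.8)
The plan is to deduce this immediately by composing two isomorphisms that are already in hand. First, Corollary \ref{cormain}.1 identifies $K^*_{T^n}(\xvs)$ with the piecewise Laurent polynomial algebra $P_K(\alpha;\varSigma)$ as an $S^\pm_{K^*}(\alpha)$-algebra; this step uses the full force of the GKM description (via Theorem \ref{hal44}) together with the translation into piecewise language carried out in the proof of Theorem \ref{supmain}. Second, Proposition \ref{KTfacering} provides the natural isomorphism $\xi^*\colon P_K(\alpha;\varSigma)\to F_K[\beta;\varSigma]$, which is an isomorphism of algebras over $\xi^{tr}\colon S^\pm_{K^*}(\alpha)\to S^\pm_{K^*}(\beta)$ and depends on the smoothness of $\varSigma$ to guarantee that each $\xi_\sigma$ is invertible over $\Z$.

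First I would simply form the composite $\xi^*\circ\Theta$, where $\Theta\colon K^*_{T^n}(\xvs)\to P_K(\alpha;\varSigma)$ denotes the isomorphism of Corollary \ref{cormain}.1. Both constituents are isomorphisms of graded rings, so the composite is as well. The only point needing attention is the bookkeeping of scalars: $\Theta$ respects the $S^\pm_{K^*}(\alpha)$-structure on the nose, while $\xi^*$ converts that structure to the $S^\pm_{K^*}(\beta)$-structure through $\xi^{tr}$. Hence $\xi^*\circ\Theta$ intertwines the $S^\pm_{K^*}(\alpha)$-action on $K^*_{T^n}(\xvs)$ with the $S^\pm_{K^*}(\beta)$-action on $F_K[\beta;\varSigma]$ along $\xi^{tr}$, which is exactly the assertion.

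There is no genuine obstacle remaining, since all of the substantive work has been absorbed into the two cited results; the corollary is precisely the composite of their conclusions. If anything, the main thing to verify is that the scalar extension along $\xi^{tr}$ supplied by Proposition \ref{KTfacering} is the same $\xi^{tr}$ appearing in the statement, so that the two algebra structures are genuinely compatible and the composite is an honest morphism of algebras over $\xi^{tr}$ rather than merely a ring isomorphism.
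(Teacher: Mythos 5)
Your proposal is correct and matches the paper's own proof, which reads simply ``Combine Corollary \ref{cormain} with Proposition \ref{KTfacering}'' --- i.e.\ precisely the composite $\xi^*\circ\Theta$ you describe. Your extra remark on tracking the scalar structures (that $\Theta$ is an isomorphism of $S^\pm_{K^*}(\alpha)$-algebras while $\xi^*$ changes scalars along $\xi^{tr}$) is a correct and harmless elaboration of what the paper leaves implicit.
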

\begin{proof}
Combine Corollary \ref{cormain} with Proposition \ref{KTfacering}.
\end{proof}
\begin{exa}
If $\xi$ is the $n\times(n+1)$ matrix $(\om -1\,\;I_n\om)$, then 
$X_\varSigma$ is $\CPn$ and there is an isomorphism 
\[
K^*_{T^n}(\CPn)\;\llra\;
S_{K^*}^\pm(\beta_0,\dots,\beta_n)
\om\Big/\big(\prod_{j=0}^n(1-\beta_j)\om\big) 
\]
of algebras over ${\xi}^{tr}$. An equivalent formula appears, for example, 
in \cite{grwi:pdk}.
\end{exa}

Finally, for any smooth polytopal fan $\varSigma$, we describe the
Borel equivariant $K$-theory of $\xvs$ in terms of the face ring
$F_K[[\delta;\varSigma]]$, whose indeterminates are
$0$--dimensional. This is an algebra over $K^*[[\delta]]$, and is the
limit of the $\varSigma^{op}$-diagram that assigns
$K^*[[\delta_1,\dots,\delta_m]]/(\delta_\omega:\omega\notin\sigma)$ to
each cone $\sigma\in\varSigma$.  It is also the completion of
$F_K[\beta;\varSigma]$ at the augmentation ideal
$(1-\beta_1,\dots,1-\beta_m)$, where $\delta_j=1-\beta_j$ for all
$1\leq j\leq m$.

We retain the notation of \eqref{vstardef}, observing that ${\xi}^{tr}$ of 
Proposition \ref{KTfacering} extends to a homomorphism 
$\xi^{tr}\colon K^*[[\gamma]]\to K^*[[\delta]]$. 
\begin{prop}
For any smooth polytopal fan $\varSigma$, there are isomorphisms
\[
K^*(ET^n\times_{T^n}\xvs)\lra
P_{K\?_B}(\gamma;\varSigma)
\stackrel{\xi^*}{\lra}F_K[[\delta;\varSigma]]
\]
of algebras over $K^*[[\gamma]]$ and $\xi^{tr}$ respectively.
\end{prop}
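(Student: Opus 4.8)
The plan is to establish the two arrows separately, each as a mild extension of results already in hand. The first map $K^*(ET^n\times_{T^n}\xvs)\to P_{K\?_B}(\gamma;\varSigma)$ is furnished directly by Theorem \ref{borkxsig}: since $\varSigma$ is smooth and polytopal, $\xvs$ is a smooth, compact, projective toric variety, whose integral cohomology is classically torsion free and concentrated in even dimensions, so $\xvs$ lies in the class to which that theorem applies. It therefore supplies an isomorphism of $K^*[[\gamma]]$-algebras, with no further work required.

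For the second map I would complete the isomorphism of Proposition \ref{KTfacering}. There the matrix $\xi$ induces a natural isomorphism $\xi^*\colon P_K(\alpha;\varSigma)\to F_K[\beta;\varSigma]$ of algebras over $\xi^{tr}\colon S_{K^*}^\pm(\alpha)\to S_{K^*}^\pm(\beta)$. Because $\xi^{tr}$ respects the scalar augmentations (it sends each $\alpha^J$ to the monomial $\beta^{\xi^{tr}J}$, which augments to $1$), this isomorphism intertwines the augmentations of $P_K(\alpha;\varSigma)$ and $F_K[\beta;\varSigma]$, and hence carries the one augmentation ideal onto the other. Completing at these ideals therefore yields an isomorphism of the completions, over the extended map $\xi^{tr}\colon K^*[[\gamma]]\to K^*[[\delta]]$. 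By the treatment of $\wedge^K$ in Section \ref{coboco}, where completion is shown to commute with limits, the completion of the domain is exactly $P_{K\?_B}(\gamma;\varSigma)$; and the completion of the target is, by its very definition, $F_K[[\delta;\varSigma]]$, with $\delta_j=1-\beta_j$. This produces the asserted isomorphism $\xi^*\colon P_{K\?_B}(\gamma;\varSigma)\to F_K[[\delta;\varSigma]]$.

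Most of the remaining checks are bookkeeping: that $\xi^{tr}$ is continuous for the $I$-adic topologies (immediate, since $\xi$ carries rays to rays and so preserves augmentations), and that the square relating the diagram maps $q_{\tau,\sigma}$ and $f_{\tau,\sigma}$ survives completion (inherited from Proposition \ref{KTfacering}, where it holds because $\xi_\tau\supseteq\xi_\sigma$ as submatrices of $\xi$). The one point deserving genuine attention is the identification of the completed target, namely that completing the monomial quotient $S_{K^*}^\pm(\beta)/((1-\beta)_\omega:\omega\notin\varSigma)$ at its augmentation ideal reproduces $K^*[[\delta]]/(\delta_\omega:\omega\notin\varSigma)$, i.e.\ that completion commutes with this quotient. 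This is precisely the claim recorded when $F_K[[\delta;\varSigma]]$ was introduced, and it holds because the defining ideal is generated by monomials in the $\delta_j=1-\beta_j$. An equally clean alternative would be to rerun the proof of Proposition \ref{KTfacering} verbatim, with $\KBV$ and the completed face diagram in place of $KV$ and $S_{K^*}^\pm$.
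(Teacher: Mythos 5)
Your proof is correct, but it takes a genuinely different route from the paper's on the first isomorphism. The paper obtains $K^*(ET^n\times_{T^n}\xvs)\cong P_{K_B}(\gamma;\varSigma)$ by completing the \emph{equivariant} isomorphism $K^*_{T^n}(\xvs)\cong P_K(\alpha;\varSigma)$ of Corollary \ref{cormain}.1 at its augmentation ideal: the left-hand side becomes Borel $K$-theory by the Atiyah--Segal completion theorem (applicable since $\xvs$ is compact), and the right-hand side becomes $P_{K_B}(\gamma;\varSigma)$ because completion commutes with limits, exactly as in Section \ref{coboco}. (The paper's citation of Corollary \ref{cormain}.2 at this point is evidently a typo for Corollary \ref{cormain}.1, which your argument sidesteps entirely.) You instead invoke Theorem \ref{borkxsig} directly, which is legitimate: a smooth polytopal fan yields a smooth, compact, projective $\xvs$ whose integral cohomology is classically torsion free and concentrated in even dimensions, so the hypotheses of that theorem hold. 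The trade-off is that the paper's route stays inside the GKM/equivariant framework and makes it transparent that the resulting isomorphism is literally the completion of the equivariant one, hence compatible with the Atiyah--Segal map $\lambda$ as in Theorem \ref{borkdwps}; your route leans on the Chern-character approximation argument used to prove Theorem \ref{borkxsig}, which is a more general result (it covers singular varieties with free even cohomology) but imports the classical evenness/torsion-freeness fact as an extra input. For the second isomorphism you do exactly what the paper does --- complete Proposition \ref{KTfacering} --- and the checks you add (that $\xi^*$ intertwines the augmentation ideals, that conewise completion computes the completion of the limit, and that the completion of $F_K[\beta;\varSigma]$ at $(1-\beta_1,\dots,1-\beta_m)$ is $F_K[[\delta;\varSigma]]$) are the right ones; the last of these is recorded in the paper immediately before the proposition, and your monomial-ideal justification of it is sound.
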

\begin{proof}
The first isomorphism is the completion of Corollary \ref{cormain}.2, and
the second is the completion of Proposition \ref{KTfacering}.  
\end{proof}

%
%
%
%
%
%
%
%
%

\end{document}